\theoremstyle{plain}
\newtheorem{prop}{Proposition}[section]
\newtheorem{thm}[prop]{Theorem}
\newtheorem{lemma}[prop]{Lemma}
\newtheorem*{thmA}{Theorem}
\theoremstyle{definition}
\newtheorem{defi}[prop]{Definition}
\theoremstyle{remark}
\newtheorem{remark}{Remark}
\numberwithin{table}{section}
\DeclareMathOperator{\Frob}{Frob}
\DeclareMathOperator{\HT}{HT}
\DeclareMathOperator{\WD}{WD}
\DeclareMathOperator{\Gal}{Gal}
\DeclareMathOperator{\Ind}{Ind}
\newcommand{\R}{\mathscr R}
\newcommand{\F}{\mathbb F}
\newcommand{\Om}{{\mathscr{O}}}
\newcommand{\GL}{{\rm GL}}
\newcommand{\PGL}{{\rm PGL}}
\def\FF{\mathbb F}
\def\CC{\mathbb C}
\def\<#1>{{\left\langle{#1}\right\rangle}}
\def\Z{{\mathbb Z}}             
\def\Q{{\mathbb Q}}             
\def\kro#1#2{\left(\frac{#1}{#2}\right)} 
\def\id#1{{\mathfrak{#1}}}      
\DeclareMathOperator{\norm}{{\mathscr N}}
\DeclareMathOperator{\trace}{{\mathrm{Tr}}}
\begin{document}

\title[Serre's Conjectures]{A simplified proof of Serre's conjecture}

\author{Luis Victor Dieulefait}
\address{Departament de Matem\`atiques i Inform\`atica, Facultat de Matem\`atiques i Inform\`atica,
  Universitat de  Barcelona, Gran Via de les Corts Catalanes,
  585. 08007 Barcelona, Spain}
\email{ldieulefait@ub.edu}
\thanks{}

\author{Ariel Martín Pacetti}
\address{Center for Research and Development in Mathematics and Applications (CIDMA),
Department of Mathematics, University of Aveiro, 3810-193 Aveiro, Portugal}
\email{apacetti@ua.pt}
\thanks{Partially supported by FonCyT BID-PICT 2018-02073 and by
the Portuguese Foundation for Science and Technology (FCT) within
project UIDB/04106/2020 (CIDMA)}
\keywords{Serre's conjecture}
\subjclass[2010]{11F33}
\dedicatory{To the memory of Jean-Pierre Wintenberger}
\date{}

\begin{abstract}
  The purpose of the present article is to present a simplified proof of
  Serre's modularity conjecture using the strong modularity lifting results
  currently available.
\end{abstract}

\maketitle


\section*{Introduction}

In \cite{MR0382173} (Section 3, Problem 1) Serre posted the following
question:

\vspace{2pt}

\noindent {\bf Question:} is it true that any odd, continuous, irreducible two
dimensional Galois representation of the absolute Galois group of
$\mathbb{Q}$ (denoted by $\Gal_\Q$) defined over a finite field is
obtained as the reduction of the $p$-adic Galois representation
attached to a modular form?

\vspace{2pt}

Recall that a $2$-dimensional representation of $\Gal_\Q$
is called \emph{odd} if the image of complex conjugation has
determinant $-1$.  This question is known as ``Serre's weak modularity
conjecture''.  In his famous 1987 paper (\cite{MR885783}) he went
further, giving a precise recipe for a level and weight (minimal in
certain sense) where the modular form should appear. This second
conjecture is known as ``Serre's strong modularity conjecture''. A
very nice reference for the precise statement of the strong modularity
conjecture is the lecture notes \cite{MR1860042}. The
equivalence between Serre's strong and weak version is due to many
authors, the main contributions on the weight reduction being due to
Edixhoven (see \cite{MR1176206} and the references therein). The level
reduction is mainly due to Ribet (see \cite{MR1047143,MR1094193} see
also \cite{MR1265566}). For this reason, we will focus on proving
Serre's weak modular conjecture, namely.

\begin{thmA}[Serre's modularity conjecture] Let
  $\rho:\Gal_\Q \to \GL_2(\overline{\F_p})$ be an odd continuous
  irreducible Galois representation. Then $\rho$ is modular,
  i.e. there exists a modular form
  $f \in S_k(\Gamma_0(N),\varepsilon)$ such that
  $\rho \simeq \overline{\rho_{f,p}}$.
\end{thmA}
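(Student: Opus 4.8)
The plan is to follow the Khare--Wintenberger strategy of proving Serre's conjecture by induction on the prime $p$, using a sequence of ``switches'' that move the residual representation between different primes while controlling ramification, and feeding everything through modularity lifting theorems. The base case $p=2$ (or the small primes $p=3,5$) must be handled by hand, relying on the exceptional structure of $\GL_2(\overline{\F_2})$ and the results on abelian varieties / classical modularity in weight and level where the relevant Hecke algebra is small or trivial. For the inductive step, assume Serre's conjecture is known for all primes strictly less than $p$, and let $\rho\colon\Gal_\Q\to\GL_2(\overline{\F_p})$ be odd, continuous and irreducible.

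The first step is \emph{existence of a minimal lift}: using potential modularity (Taylor's theorem, via Hilbert--Siegel modular forms over a totally real field) together with the existence theorems for geometric $p$-adic lifts of a given residual representation with prescribed Hodge--Tate weights and inertial type (the Khare--Wintenberger lifting construction, resting on Böckle's and Ramakrishna-type deformation-theoretic input, and on potential modularity to guarantee the relevant deformation ring is nonzero and finite over $\ZZ_p$), produce a continuous representation $\rho_p\colon\Gal_\Q\to\GL_2(\overline{\QQ_p})$ lifting $\rho$, which is de Rham of the minimal Serre weight and minimally ramified away from $p$. The second step is a chain of \emph{modularity switches}: using compatible systems (Taylor's potential automorphy gives that $\rho_p$ lives in a strictly compatible system $\{\rho_\ell\}$), reduce $\rho_p$ modulo a carefully chosen auxiliary prime $\ell<p$; arrange (by moving the weight down to $2$ via an intermediate switch, and keeping the level controlled) that the residual representation $\overline{\rho_\ell}$ at that smaller prime is irreducible, so that the inductive hypothesis applies and $\overline{\rho_\ell}$ is modular; then run a modularity lifting theorem (Kisin's, and its refinements handling the $p=\ell$ small-weight and potentially-crystalline cases, together with the Skinner--Wiles and base-change techniques for the residually reducible or residually dihedral leftover cases) to conclude that $\rho_\ell$, hence the whole compatible system, hence $\rho_p$, is modular. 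Reducing $\rho_p$ modulo $p$ then gives that $\rho$ itself is modular.

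The two genuinely hard inputs — which I would simply \emph{invoke} as the ``strong modularity lifting results currently available'' promised in the abstract — are (i) a modularity lifting theorem in residual characteristic $p$ powerful enough to handle arbitrary Serre weight (including weight $2$ with $p$ in the level, i.e. the potentially Barsotti--Tate case) and arbitrary determinant, with the residual representation only assumed irreducible; and (ii) Taylor's potential automorphy theorem, used both to place $\rho_p$ in a compatible system and to guarantee nonvanishing of deformation rings. The main obstacle in organizing the argument is the combinatorics of the induction: one must ensure that at each switch the auxiliary prime can be chosen so that the new residual representation is simultaneously (a) irreducible, (b) of a weight/level to which either the inductive hypothesis or a known lifting theorem applies, and (c) such that the ``big image'' hypotheses of the lifting theorem are met; this is exactly where the weight-$2$ intermediate reductions and the analysis of the few exceptional small images (dihedral, or image in $\GL_2(\F_3)$, $\GL_2(\F_5)$) are needed, and where the bulk of the careful case-work lies. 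The point of the present simplification is that, with the stronger lifting theorems now available, several of the original Khare--Wintenberger switches become unnecessary, so I would streamline the chain to the shortest sequence of reductions that still lands in a known case.
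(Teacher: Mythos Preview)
Your proposal has the right ingredients (minimal lifts, compatible systems, modularity lifting, Skinner--Wiles for the residually reducible case), but the organizing principle is off in a way that matters.

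First, the paper does \emph{not} proceed by induction on $p$, and neither did Khare--Wintenberger. Your scheme ``assume Serre for all primes $<p$, lift, move to a smaller prime $\ell$, apply the inductive hypothesis'' has no workable base case: Serre's conjecture for $p=2$ cannot be proved independently --- it is established \emph{last}, by lifting a mod~$2$ representation and reducing modulo a prime \emph{larger} than~$3$, i.e.\ the reduction goes in the opposite direction to your induction. Likewise, Serre for $p=3$ is not available as a base case, since $\GL_2(\F_9)$ already has non-solvable quotients and Langlands--Tunnell does not apply. The actual base cases are much more specific: Tate--Serre (no irreducible level~$1$ residual representations for $p=2,3$) and Schoof (no nonzero semistable abelian varieties with good reduction outside~$5$).

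Second, you omit the single most important technical device in both proofs: the \emph{good-dihedral prime}. Before killing any primes from the level, one first \emph{adds} a large auxiliary prime~$N$ with a supercuspidal (dihedral of order $2q$) inertial type; this guarantees that every residual representation encountered during the subsequent chain of congruences has non-solvable image, so that the lifting theorems apply at each step. Without this, your phrase ``arrange that the new residual representation is irreducible'' hides exactly the difficulty the good-dihedral prime was invented to solve. The paper's actual architecture is: pass to weight~$2$; add the good-dihedral prime~$N$; strip all primes $\neq N$ from the level one at a time (each by reducing mod that prime and taking a minimal lift); strip~$N$ itself (here Pan's theorem on residually reducible lifts is what allows the simplification over KW, since one no longer needs large image at this step); and finally, now at level~$1$, reduce mod~$5$ --- where Serre's weight is automatically $\le 6$ --- and finish via $p=3$ or Schoof. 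The ``weight reduction'' of the original KW proof disappears entirely; that is the simplification, and your sketch, which still speaks of ``moving the weight down to~$2$ via an intermediate switch'', misses this.
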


\begin{remark}
  In the present article (and following Serre's original conjecture
  given in \cite{MR885783}) we will only consider the so called
  ``cohomological'' modular forms, i.e. those whose weight $k$ is at least $2$ (so that
  they appear in the cohomology of a Shimura curve).  However, the
  notion of ``modularity'' in the last statement could also include
  modular forms of weight one, since the congruence proved in
  \cite{MR379379} $\S 6.9$ shows that if the representation $\rho$ is
  congruent to a representation coming from a weight $1$ modular form,
  then it is also congruent to the representation attached to a
  cohomological modular form, so we can (and will) restrict to weights
  $k \ge 2$.
\label{rem:weight1}  
\end{remark}

The first cases of the conjecture (small level and weight) were proved
by Khare and Wintenberger (\cite{MR2480604}), and also independently
by the first named author (\cite{MR2414504}). A complete proof of the
conjecture was then given by Khare and Wintenberger in
\cite{KW,MR2551764} (a proof for the case of odd level
was also given by the first named author in \cite{MR2959671}). The proof is
based on a smart inductive argument involving both a level and a
weight reduction. The main issue when the proof appeared was that
different modularity lifting results required many technical
conditions (specially while manipulating representations whose
residual image is reducible). Such conditions have been removed during
the last years, which allows us to present a more elegant inductive
argument. In particular, our argument becomes much simpler because the
sophisticated process of weight reduction disappears.  The procedure
(that will be explained in detail in
Section~\ref{section:proofoddcase}) to prove the conjecture when $p$
is odd is the following:
\begin{itemize}
\item Make the representation $\rho$ part of a compatible system
  $\{\rho_{\ell}^{(0)}\}$ (whose definition is
  recalled in Section~\ref{section:lifts}).
  
\item Add a large prime $N$ to the level of the family. This prime is
  needed to ensure that $p$-th member of the family (and the families
  appearing in the ``chain'' of congruences) has large residual image
  for each prime $p$ dividing the ``level'' of the family (except at
  $N$ itself). The prime number $N$ is called a \emph{good-dihedral
    prime} in \cite{KW}.
\item If a prime $p \neq N$ is in the level of the system, remove it
  by looking at the reduction modulo $p$ of the $p$-th member of the
  family and taking a minimal lift (the prime $p=2$ is handled in a
  similar way, with extra technicalities).
  
\item Remove the remaining prime $N$ from the level via the same
  procedure (strong modularity lifting results assure this can be done
  even if the residual image is reducible). Now we are left with a family
  of level $1$ (but without any control on the weight).
  
\item The reduction of the $5$-th member of the last family is either
  reducible (so it is modular), or it is irreducible with Serre
  weight (up to twist) $2$, $4$ or $6$. In the first two cases, taking
  a lift with the right weight, and moving (through a compatible
  family) to the prime $p=3$ (as explained in {\bf Paso 6}) allows us to
  reach the base case of level $1$ proved by Serre.  If Serre's weight
  at the prime $5$ is $6$, our representation is the reduction of a
  representation attached a semistable abelian variety unramified
  outside $5$, corresponding to a base case proved by Schoof.
\end{itemize}
It is important to emphasize that the last step, which substitutes the
``weight reduction", is only possible due to two very strong and
general modularity lifting theorems: a result of Kisin in the
residually irreducible case and a recent result of Pan in the
residually reducible case. The proof for $p=2$ is based on a reduction
to the odd case.

The article is organized as follows: the first section is the most
technical one. It contains the main results (mostly different
modularity lifting theorems) needed to prove Serre's modularity
conjectures. One of the goals of the present article is to allow a
non-expert reader to learn the ideas behind the proof of Serre's
conjectures taking for granted the results of this
section. However, we included in the first section two lemmas
(~\ref{lemma:equivalence} and \ref{lemma:nobaddihedral}) on properties
of residual Galois representations that are well known to experts, but
whose detailed proof is hard to find in the literature. The second
section contains the proof of Serre's conjectures in the case of odd
characteristic, filling in the details of the previous sketch. The
last section contains the proof in characteristic two.

\vspace{5pt}

\noindent{\bf Acknowledgments:} we thank Professor Vytautas
Pa\v{s}k\={u}nas for pointing out an improvement of modularity lifting
theorems at $p=2$ which allowed us to simplify the Paso 4 of our
proof. We also thank the anonymous referee for many suggestions that
improved the quality of the present article.

\section{Main results involved in the proof}

\subsection{Base cases}
The base cases we rely on to \emph{propagate} modularity are the
following.

\begin{thm}
  \label{thm:basecases}
  There are no continuous, odd, absolutely irreducible two dimensional Galois
  representations of the absolute Galois group of $\Q$ unramified outside $p$ and with values on  a finite field of characteristic $p$, for $p=2$ or $3$. 
\end{thm}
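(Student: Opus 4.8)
The plan is to translate the non-existence statement into a statement about number fields of small root discriminant and then contradict the unconditional discriminant inequalities; this is, in essence, Tate's argument for $p=2$ and Serre's for $p=3$.

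Suppose $\rho$ is a representation as in the statement, with values in a finite field $\F$ of characteristic $p\in\{2,3\}$, and put $G:=\rho(\Gal_\Q)$, a finite subgroup of $\GL_2(\F)$. Let $K\subseteq\overline{\Q}$ be the field fixed by $\ker\rho$, so that $\Gal(K/\Q)\cong G$ and $K/\Q$ is unramified outside $\{p,\infty\}$. The first step is to bound the root discriminant $\delta_K:=|\Disc_K|^{1/[K:\Q]}$ from above. Since $p$ is the only finite ramified prime, $\Disc_K$ is a power of $p$; writing $G_0\supseteq G_1\supseteq\cdots$ for the ramification filtration in lower numbering of a prime of $K$ above $p$, one has $\delta_K=p^{\,|G_0|^{-1}\sum_{i\ge0}(|G_i|-1)}$. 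Now $G_1$, the wild inertia, is a $p$-subgroup of $\GL_2(\F)$, hence conjugate into the unipotent radical of a Borel and in particular elementary abelian; moreover --- and this is the crucial point --- because the ground field is $\Q$ the jumps of the ramification filtration are uniformly bounded (for instance an order-$p$ character of $\Gal_{\Q_p}$ has conductor exponent at most $3$, with comparable control in general). Together with the conductor--discriminant formula these facts yield an explicit upper bound $\delta_K\le B_p$, with $B_2$ and $B_3$ small.

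The second step is to apply the unconditional lower bounds of Minkowski, refined by the analytic bounds of Odlyzko (which are what one needs for $p=3$, the numerics being tighter there): a number field of sufficiently large degree has root discriminant larger than $B_p$. Hence $[K:\Q]=|G|$ is bounded by an explicit constant, leaving only finitely many groups $G$. These I would dispose of using Dickson's classification of the finite subgroups of $\PGL_2(\overline{\F_p})$: cyclic, dihedral, $A_4$, $S_4$, $A_5$, and $\PSL_2(\F_{p^s})$ or $\PGL_2(\F_{p^s})$. Irreducibility of $\rho$ excludes the cyclic case and every other case in which $\rho$ has a stable line. In the dihedral case $\rho\cong\Ind_{\Gal_K}^{\Gal_\Q}\chi$ for some quadratic field $K$ ramified only at $p$ --- necessarily imaginary when $p=3$ by oddness, so $K=\Q(\sqrt{-3})$, while for $p=2$ one has $K\in\{\Q(i),\Q(\sqrt2),\Q(\sqrt{-2})\}$ --- and as $\chi$ has order prime to $p$ and $p$-power conductor, a direct computation of the relevant ray class groups shows that $\chi$ is $\Gal(K/\Q)$-invariant, so $\rho$ is reducible, a contradiction. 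Each of the surviving images $A_4$, $S_4$, $A_5$, $\PSL_2(\F_{p^s})$, $\PGL_2(\F_{p^s})$ forces $[K:\Q]\ge12$, contradicting the bound $\delta_K\le B_p$.

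The main obstacle is the first step: producing a root-discriminant bound $B_p$ small enough to beat the discriminant inequalities throughout the relevant range of degrees. The delicate ingredient is the precise control of wild ramification at $p$ (which is present exactly when $p\mid|G|$): beyond bounding $|G_1|$, which the constraint $\dim\rho=2$ handles, one must bound the higher jumps of the ramification filtration, and this is precisely where the hypothesis that the base field is $\Q$ is used. Secondarily, the dihedral subcase, although elementary, requires the separate class-field-theoretic computations indicated above, and for $p=3$ the estimates are tight enough that Minkowski's bound alone is insufficient.
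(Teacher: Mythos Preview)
The paper's own proof is simply a citation to Tate (for $p=2$) and to Serre's remark that the identical method handles $p=3$; no argument is reproduced. Your outline is a faithful sketch of precisely that Tate--Serre discriminant-bound argument: wild inertia is a $p$-group in $\GL_2(\overline{\F_p})$, hence unipotent and elementary abelian; the higher ramification jumps over $\Q_p$ are then controlled well enough to bound the root discriminant of the splitting field; Odlyzko's bounds cap the degree; and the finitely many surviving projective images are disposed of via Dickson's classification together with a short class-field-theoretic check in the dihedral case. So you and the paper point to the same proof, with you going further by indicating its content. Your own diagnosis of where the real work lies---making the wild-ramification bound explicit enough to beat the analytic discriminant bounds, especially at $p=3$ where Minkowski alone does not suffice---is accurate and is exactly the heart of Tate's computation.
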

\begin{proof}
  The result was proved by Tate (\cite{MR1299740}) for $p =2$, and later Serre observed that the exact same argument worked for $p=3$ (\cite[page 710]{MR3223094}). 
\end{proof}

\begin{thm} There do not exist  non-zero semistable rational
  abelian varieties that have good reduction outside $\ell$ for
  $\ell = 2, 3, 5, 7, 13$.
\label{thm:Schoof}
\end{thm}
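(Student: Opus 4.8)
This is a theorem of Schoof, and the plan is to follow his analysis of finite flat group schemes over $\Z[1/\ell]$. Suppose for contradiction that $A/\Q$ is a non-zero semistable abelian variety of dimension $g\ge 1$ with good reduction outside $\ell$, and fix a small auxiliary prime $p\ne\ell$ (the precise choice depending on $\ell$). The object to study is the Galois module $A[p^n]$: since $A$ has good reduction away from $\ell$ and semistable reduction at $\ell$, each $A[p^n]$ extends to a finite flat group scheme over $\Z[1/\ell]$ whose restriction to a neighbourhood of $\ell$ is an extension of an \'etale group scheme by the $p$-power torsion of a torus. Following Schoof one packages these into a category $\mathcal D$ of finite flat group schemes over $\Z[1/\ell]$ killed by a power of $p$ and ``semistable at $\ell$'', a variant of the categories studied by Fontaine.

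The first step is to pin down the simple objects of $\mathcal D$. Combining Fontaine's bounds on the different of a finite flat group scheme over $\Zp$ with Odlyzko's discriminant lower bounds --- the same circle of ideas behind Theorem~\ref{thm:basecases} --- one shows that the number field cut out by $A[p]$ has root discriminant too small to carry either an absolutely irreducible constituent of dimension $\ge 2$ or a nontrivial one-dimensional constituent; hence the only simple objects of $\mathcal D$ are $\Z/p\Z$ and $\mu_p$. The second step is to compute, by fppf cohomology and class field theory, the extension groups $\operatorname{Ext}^1_{\mathcal D}(\Z/p\Z,\mu_p)$, $\operatorname{Ext}^1_{\mathcal D}(\mu_p,\Z/p\Z)$, $\operatorname{Ext}^1_{\mathcal D}(\Z/p\Z,\Z/p\Z)$ and $\operatorname{Ext}^1_{\mathcal D}(\mu_p,\mu_p)$ for $\ell\in\{2,3,5,7,13\}$ and the chosen $p$. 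For these $\ell$ they are small enough that, passing to the limit over $n$ and using a polarization to balance the \'etale and multiplicative parts, the $p$-divisible group $A[p^\infty]$ over $\Z[1/\ell]$ is an extension of $(\Qp/\Zp)^{g}$ by $\mu_{p^\infty}^{g}$ with trivial Galois action on the quotient; equivalently the $p$-adic Tate module sits in an exact sequence $0\to\Zp(1)^{g}\to T_pA\to\Zp^{g}\to 0$ of $\Gal_\Q$-modules.

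The final step is to show no non-zero abelian variety with this structure exists. Reducing the filtration modulo $p$ exhibits $A[p]$ as an extension of $(\Z/p\Z)^{g}$ by $\mu_p^{g}$ whose class, by the $\operatorname{Ext}$ computation, is built from the Kummer class of $\ell$; Schoof then rules this out --- for $g=1$ it reduces to the (known) nonexistence of elliptic curves of conductor $\ell$ for $\ell\in\{2,3,5,7,13\}$, and in general one uses the further constraints imposed by the polarization on the extension class together with discriminant bounds for the fields cut out by the $A[p^n]$. The main obstacle is the second step: the $\operatorname{Ext}$ computations are the technical heart of the proof, and it is precisely there (and in the final bookkeeping) that the admissible list of primes $\ell$ gets forced --- which is also why the statement is open for larger $\ell$.
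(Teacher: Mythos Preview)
Your sketch is a faithful outline of Schoof's argument, and the paper's own proof is simply a one-line citation to \cite[Theorem~1.1]{MR2148199}, so you have in fact written more than the paper does. Since both you and the paper are invoking the same result of Schoof, there is no divergence in approach---your proposal just unpacks what the paper leaves as a black box.
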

\begin{proof}
See \cite[Theorem 1.1]{MR2148199}.
\end{proof}

\begin{thm}[Langlands-Tunnell]
  Let $p$ be an odd prime and $\rho:\Gal_\Q \to \GL_2(\overline{\F_p})$
  be an odd, continuous, irreducible representation with solvable image. Then
  $\rho$ is modular.
  \label{thm:solvable}
\end{thm}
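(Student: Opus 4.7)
The plan is to reduce the statement to the classical Langlands--Tunnell theorem for complex Galois representations and then descend to cohomological weight via a congruence. First I would analyze the projective image of $\rho$, which is a finite solvable subgroup of $\PGL_2(\overline{\F_p})$. By Dickson's classification of finite subgroups of $\PGL_2$ over any algebraically closed field, irreducibility of $\rho$ rules out the cyclic and Borel-type images, while solvability rules out $A_5$ and the simple groups $\PSL_2(\F_{p^r})$, $\PGL_2(\F_{p^r})$ for $p^r \geq 4$. Hence the projective image must be dihedral, $A_4$, or $S_4$.

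If the projective image is dihedral, then $\rho \simeq \Ind_{\Gal_K}^{\Gal_\Q} \chi$ for some quadratic extension $K/\Q$ and a finite-order character $\chi$. Lifting $\chi$ to a finite-order character $\tilde{\chi}\colon \Gal_K \to \CC^\times$ (possible since characters of finite order on profinite groups lift from $\overline{\F_p}^\times$ to $\overline{\Q}^\times$) and inducing, one obtains a complex representation $\tilde{\rho}$ whose associated theta series is a weight~$1$ cuspidal newform by Hecke/Weil. The Deligne--Serre congruence recalled in Remark~\ref{rem:weight1} then produces a cohomological eigenform $f$ with $\rho \simeq \overline{\rho_{f,p}}$.

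If the projective image is $A_4$ or $S_4$, I would construct an odd, irreducible, finite-image lift $\tilde{\rho}\colon \Gal_\Q \to \GL_2(\CC)$ of $\rho$. The strategy is two-stage: first lift the projective representation to a homomorphism $\Gal_\Q \to \PGL_2(\overline{\Q})$ with the same image type (using that $A_4, S_4$ embed in $\PGL_2(\CC)$ and solving the corresponding Galois embedding problem over $\Q$); then lift from $\PGL_2$ to $\GL_2$ using Tate's vanishing $H^2(\Gal_\Q, \overline{\Q}^\times) = 0$, while adjusting the central character so that the determinant is a suitable odd finite-order character. The resulting $\tilde{\rho}$ is odd, irreducible, and has solvable image, so the classical Langlands--Tunnell theorem over $\CC$ attaches to it a weight~$1$ cuspidal newform; a further application of Deligne--Serre yields the required cohomological form congruent to $\rho$ modulo $p$.

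The main obstacle is the construction of the characteristic zero lift in the exotic $A_4, S_4$ cases. One must simultaneously arrange the right ramification, the right image of complex conjugation (to preserve oddness), and the compatibility of the projective lift with a linear lift; Tate's theorem on the vanishing of the global obstruction in $H^2(\Gal_\Q, \overline{\Q}^\times)$ is what makes the second stage go through, and the arithmetic of $A_4$- and $S_4$-extensions of $\Q$ (their Schur multipliers are $\Z/2$, so the obstruction is manageable) is what makes the first stage go through. Once the lift exists, the passage through Langlands--Tunnell and Deligne--Serre is essentially automatic.
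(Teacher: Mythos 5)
Your proposal follows the same overall strategy as the paper: Dickson's classification reduces to projective image being cyclic (excluded by irreducibility over the algebraically closed field), dihedral, $A_4$, or $S_4$; one then lifts to an odd representation over $\CC$, invokes the classical Langlands--Tunnell package (Hecke for dihedral, Langlands for $A_4$, Tunnell for $S_4$) to produce a weight one eigenform, and uses the Deligne--Serre congruence of Remark~\ref{rem:weight1} to reach cohomological weight. Your treatment of the dihedral case is correct: lifting the finite-order character and inducing produces a complex representation whose reduction visibly recovers $\rho$.

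In the $A_4/S_4$ case, however, there is a gap that your write-up does not close. It is not enough for $\tilde{\rho}\colon \Gal_\Q \to \GL_2(\CC)$ to exist with the right projective image \emph{type}; you need $\tilde{\rho}$, placed in $\GL_2(\overline{\Q_p})$ via an embedding and reduced modulo the maximal ideal of a stable lattice, to recover $\rho$ up to twist --- otherwise the Deligne--Serre step produces a congruence for some representation, but not for $\rho$. Ensuring this requires choosing the abstract isomorphism between the finite projective image $G \subset \PGL_2(\overline{\F_p})$ and its counterpart $G' \subset \PGL_2(\CC)$ compatibly with reduction. For $p \ge 5$ this is routine: $|G|$ is prime to $p$, the kernel of $\PGL_2(\overline{\Z_p}) \to \PGL_2(\overline{\F_p})$ is pro-$p$, so a Teichm\"uller-type lift of $G$ inside $\PGL_2(\overline{\Z_p})$ does the job. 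For $p=3$, where $3$ divides both $|A_4|$ and $|S_4|$, a separate case-by-case verification is needed, and your argument does not engage with this. Two minor mislabels as well: identifying $G$ with a finite subgroup of $\PGL_2(\CC)$ is a purely group-theoretic step, not a Galois embedding problem; and the Schur multiplier calculation is relevant to your \emph{second} stage $\PGL_2 \to \GL_2$ (where Tate's vanishing of $H^2(\Gal_\Q,\overline{\Q}^\times)$ kills the global obstruction), not to the first.
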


\begin{proof}
  By Dickson's classification, any solvable subgroup of
  $\PGL_2(\overline{\F_p})$ is isomorphic to a cyclic group, a
  dihedral group, $A_4$ or $S_4$. In all cases, the representation
  lifts to an odd representation of $\GL_2(\CC)$. The cyclic case
  gives a reducible representation (and can be omitted). The dihedral
  case was considered by Hecke, the tetrahedral case was proved by
  Langlands in \cite{MR574808} and the octahedral by Tunnell in
  \cite{MR621884}. Note that all the aforementioned results provide a
  weight one modular form (which is enough for our purposes by
  Remark~\ref{rem:weight1}).
\end{proof}

%
\subsection{Modularity lifting Theorems}
Let $\rho:\Gal_\Q \to \GL_2(\overline{\Q_p})$ be a continuous Galois representation.

\vspace{2pt}
\noindent {\bf Problem:} how can we decide whether $\rho$ matches the representation attached to a modular form?

\vspace{2pt}

The term ``Modularity lifting Theorem'' refers to results (like the
ones given in the pioneering articles \cite{MR1333035,MR1333036}) that
provide an answer to the problem. Most modularity lifting theorems
have as a key hypothesis that the residual representation
$\overline{\rho}$ (obtained as the reduction of $\rho$ modulo $p$)
matches the reduction of a representation coming from a modular
form. If the residual representation $\overline{\rho}$ happens to be
reducible (a very hard case of study), it matches the representation
attached to an Eisenstein series, so it is already ``residually modular''.

If $p$ is an odd prime, let $p^\star = \kro{-1}{p} p$, so that the
extension $\Q(\sqrt{p^\star})$ is the unique quadratic extension of
$\Q$ unramified outside $p$.

\subsubsection{Modularity lifting Theorems for residually irreducible representations}

\begin{thm}
  \label{thm:R=T}
  Let $p$ be an odd prime and $\rho:{\Gal_\Q} \to \GL_2(\overline{\Q_p})$
  be a continuous, odd Galois representation ramified at finitely many
  primes and satisfying all the following hypothesis:
  \begin{itemize}
  \item The residual restriction
    $\overline{\rho}|_{\Gal_{\Q(\sqrt{p^\star})}}$ is absolutely
      irreducible,
      
    \item The representation $\rho|_{\Gal_{\Q_p}}$ is de Rham with
      Hodge-Tate weights $\{0,k-1\}$, with $k > 1$,
      
    \item The residual representation is modular,
      i.e. $\overline{\rho} \simeq \overline{\rho_f}$.
    \end{itemize}
    Then $\rho$ matches the representation of a weight $k$ modular form.
\end{thm}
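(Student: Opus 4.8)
The plan is to deduce Theorem~\ref{thm:R=T} from the known $R = \mathbb T$ theorems in the literature, most notably the work of Kisin on potentially semistable deformation rings together with its subsequent refinements; the role of this proposal is to explain how the hypotheses are arranged so that such a black box applies. First I would set up the relevant global deformation problem: fix the residual representation $\overline\rho \simeq \overline\rho_f$, and consider deformations of $\overline\rho$ that are unramified outside a finite set $S$ of primes (containing $p$ and the ramification of $\rho$), that are odd, and that at $p$ are de Rham with Hodge--Tate weights $\{0,k-1\}$ in a fixed $p$-adic Hodge type, with no condition imposed away from $p$ other than minimal consistency with $\overline\rho$ (i.e.\ one works with the full universal framed deformation ring at the bad primes, or equivalently allows arbitrary ramification). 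The associated universal deformation ring $R$ receives a surjection from a suitable Hecke algebra $\mathbb T$ acting on the space of modular forms of weight $k$ and the appropriate level, precisely because $\overline\rho$ is modular; the point of the theorem is that this surjection is an isomorphism after inverting $p$, so that $\rho$, corresponding to a $\overline{\mathbb Q_p}$-point of $R$, is modular.

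The key input that makes this work is the hypothesis that $\overline\rho|_{\Gal_{\Q(\sqrt{p^\star})}}$ is absolutely irreducible. This is the ``big image'' condition (often phrased as $\overline\rho|_{\Gal_{\Q(\zeta_p)}}$ being absolutely irreducible, the two being equivalent here since $\Q(\sqrt{p^\star}) \subseteq \Q(\zeta_p)$) needed both for the Taylor--Wiles--Kisin patching argument to produce a well-behaved patched module and for the vanishing of the relevant Galois cohomology/Selmer groups; it also rules out the dihedral obstructions. Granting this, the strategy is the standard one: one proves modularity of $\rho$ first in the minimally ramified case by a patching argument comparing $R$ with a Hecke module, using Kisin's local-at-$p$ deformation ring computations to handle the de Rham condition with the prescribed Hodge--Tate weights and arbitrary inertial type, and then one removes level-lowering/level-raising discrepancies by the usual Ihara-type and Mazur-principle arguments together with the analysis of congruence modules, propagating from the minimal case to the general member of the deformation problem. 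Since the statement allows any weight $k > 1$ and does not restrict the Barsotti--Tate vs.\ general crystalline/semistable dichotomy, one genuinely needs the general form of Kisin's theorem (as extended, e.g., in the work of Gee and others) rather than the original Taylor--Wiles--Diamond statements.

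I expect the main obstacle to be purely expository rather than mathematical: isolating a single published modularity lifting theorem whose hypotheses match exactly what is stated here — odd, residually modular, residually absolutely irreducible over $\Q(\sqrt{p^\star})$, de Rham at $p$ with Hodge--Tate weights $\{0,k-1\}$, and \emph{no} tameness or Fontaine--Laffaille restriction on $k$, and no condition on the behaviour at primes $\ell \neq p$. One has to be slightly careful that the conventions on ``weight $k$ modular form'' versus Hodge--Tate weights $\{0,k-1\}$ match, that the allowed inertial types at bad primes are covered (this is where Kisin's and later work on the local deformation rings at $\ell \neq p$ is invoked), and that the residually dihedral sub-case permitted by ``irreducible over $\Q(\sqrt{p^\star})$ but perhaps induced from a larger field'' is genuinely handled. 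So rather than reprove anything, I would state the theorem as a consequence of the cited strong modularity lifting results, with a short proof that merely verifies the hypotheses translate correctly and points to the precise references (Kisin, and the refinements removing the technical conditions).
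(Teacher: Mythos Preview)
Your proposal is correct and matches the paper's approach exactly: the paper's proof is nothing more than a chain of citations to Kisin's modularity lifting theorems (one reference for $k=2$, another for general $k$) together with the subsequent papers that remove Kisin's auxiliary hypotheses (the local--global $p$-adic Langlands compatibility, and a second condition removed separately for $p\ge 5$ and for $p=3$). Your final paragraph---``state the theorem as a consequence of the cited strong modularity lifting results, with a short proof that merely verifies the hypotheses translate correctly and points to the precise references (Kisin, and the refinements removing the technical conditions)''---is a precise description of what the paper actually does.
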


\begin{proof}
  The case $k=2$ is proven in \cite{Kisinann} (Theorem in the second
  page), while the general case follows from \cite{MR2505297} (also
  stated as Theorem in the second page). There are two extra
  hypothesis in the last result: the second one (related to a
  compatibility between classical and $p$-adic local Langlands
  correspondence, as explained in Hypothesis (1.2.6) of
  \cite{MR2505297}) is removed in \cite[Theorem 1.2.1]{MR2251474} and
  in \cite[Theorem 1.1]{MR3306557}. Our precise statement incorporates the results in
  \cite[Theorem 1.4]{Hu-Tan}, where the other hypothesis in Kisin's
  article is removed for $p \ge 5$ together with \cite{1803.07451} (main theorem) where it is removed for $p=3$.
\end{proof}

We also need a similar result for $p=2$.

\begin{thm}
  \label{thm:R=Tat2}
  Let $\rho:{\Gal_\Q} \to \GL_2(\overline{\Q_2})$
  be a continuous, odd Galois representation ramified at finitely many
  primes and satisfying the following hypothesis:
  \begin{itemize}
    \item The representation $\rho|_{\Gal_{\Q_2}}$ is de Rham with
      Hodge-Tate weights $\{0,k-1\}$, with $k > 1$,
      
  \item The residual representation $\overline{\rho}$ is modular and
    has non-solvable image.
    \end{itemize}
    Then $\rho$ matches the representation of a weight $k$ modular form.
\end{thm}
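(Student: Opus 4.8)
\noindent\emph{Proof strategy.} This is a modularity lifting theorem of Taylor--Wiles--Kisin type at the prime $2$. For $k=2$ it is Kisin's theorem on $2$-adic Barsotti--Tate representations, and the case of general weight, together with the removal of the remaining auxiliary hypotheses, follows by combining the work of Khare--Wintenberger with the subsequent study of $2$-adic potentially semistable deformation rings (incorporating a recent improvement that clears the last technical conditions at $p=2$). The plan is to run the standard argument, parallel to that of Theorem~\ref{thm:R=T}, with the extra care forced by $p=2$.

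Write $\overline{\rho}\simeq\overline{\rho_f}$ for the given modular form, and, after twisting by a finite order character, normalise $\rho$ so that $\det\rho$ is a power of the cyclotomic character times a finite order character. One fixes the $2$-adic Hodge type of $\rho|_{\Gal_{\Q_2}}$ --- the Hodge--Tate weights $\{0,k-1\}$ together with the inertial type and the Frobenius semisimplification --- and, at each ramified prime $\ell\neq 2$, the inertial type of $\rho|_{\Gal_{\Q_\ell}}$, and considers the global deformation problem parametrising lifts of $\overline{\rho}$ of this fixed type at $2$, of the prescribed type at the other ramified primes, unramified elsewhere, and of the chosen determinant. Let $R$ be the associated deformation ring; $\rho$ defines a $\overline{\Q_2}$-point of $R$, so it is enough to prove that $R^{\mathrm{red}}$ is a Hecke algebra.

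To this end we would patch the completed homology of modular curves --- or, after a solvable base change, the analogous module built from quaternionic automorphic forms --- along Taylor--Wiles data, producing a maximal Cohen--Macaulay module $M_\infty$ over a power series ring $R_\infty$ over the completed tensor product of the local framed lifting rings, together with a surjection $R_\infty\twoheadrightarrow R$ compatible with the action on $M_\infty$. The hypothesis that $\overline{\rho}$ has non-solvable image is exactly what makes this work at $p=2$: by Dickson's classification the image contains $\SL_2(\F_{2^r})$ with $r\ge 2$, which is large enough --- being perfect and absolutely irreducible it stays absolutely irreducible on the relevant quadratic subextension, and the pertinent Galois cohomology vanishes --- to ensure a sufficient supply of Taylor--Wiles primes $q\equiv 1\pmod{2^n}$. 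Modularity of $\overline{\rho}$ guarantees that $M_\infty\neq 0$.

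The remaining, and genuinely hard, input is the local analysis at $2$: one must show that $M_\infty$ meets, hence (being Cohen--Macaulay of the expected dimension) is supported on, every irreducible component of the potentially semistable deformation ring of $\Gal_{\Q_2}$ of Hodge type $\{0,k-1\}$ and prescribed inertial type. This is a Breuil--M\'ezard type multiplicity statement at $p=2$, which one establishes from the explicit description of these $2$-adic deformation rings together with the $2$-adic local Langlands correspondence. Granting it, $M_\infty$ is a faithful $R_\infty$-module, $R_\infty$ has the expected dimension and is $\mathbb{Z}_2$-flat, and the usual commutative algebra of the Taylor--Wiles--Kisin method forces $R^{\mathrm{red}}=\TT$; in particular $\rho$ is the representation attached to an eigenform, of weight $k$ by the prescribed Hodge--Tate weights. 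The main obstacle is concentrated entirely at the prime $2$: controlling the local deformation rings of $\Gal_{\Q_2}$, and checking that the patching machinery still runs with the comparatively weak bigness provided by $\SL_2(\F_{2^r})$ --- precisely the points settled in the works cited above.
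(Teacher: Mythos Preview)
The paper does not actually argue this theorem: its proof is a bare citation to Kisin's $2$-adic modularity theorem for the potentially Barsotti--Tate case and to later work extending it to arbitrary weight $k>1$ and removing the residual auxiliary hypotheses. Your proposal instead outlines the Taylor--Wiles--Kisin patching machinery that those references carry out.

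As a high-level summary your sketch is accurate, and you correctly isolate the two pressure points peculiar to $p=2$: the non-solvable image hypothesis (forcing the residual image to contain $\SL_2(\F_{2^r})$ with $r\ge 2$, which is what supplies enough Taylor--Wiles primes in characteristic $2$) and the Breuil--M\'ezard--type control of the $2$-adic potentially semistable local deformation rings. But what you have written is a table of contents rather than a proof: each ingredient you invoke --- the geometry of the local deformation rings at $2$ for general Hodge type, the multiplicity statement via $2$-adic local Langlands, the patching of completed (co)homology --- is itself a substantial paper, and the role of this theorem in the present article is precisely to import them as a black box. One small wobble: your remark that the image ``stays absolutely irreducible on the relevant quadratic subextension'' is an echo of the odd-$p$ condition on $\Q(\sqrt{p^\star})$, which has no direct analogue at $p=2$ (indeed $\Q(\zeta_2)=\Q$); the operative condition at $2$ is simply adequacy of the residual image, and non-solvability delivers that directly.
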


\begin{proof}
  See \cite[Theorem 0.1]{MR2551765} for $k=2$ and $\rho$ potentially Barsotti-Tate, \cite[Theorem 1.1]{MR3544298} and \cite[Theorem A]{MR4257080} for the general case.
\end{proof}

\subsubsection{Modularity of residually reducible representations}

\begin{thm}
  \label{thm:modularityreduciblecase}
  Let $p \ge 5$ be a prime number and
  $\rho:{\Gal_\Q} \to \GL_2(\overline{\Q_p})$ be a continuous,
  irreducible odd Galois representation ramified at finitely many
  primes and satisfying the following two hypothesis:
  \begin{itemize}
    \item The representation $\rho|_{\Gal_{\Q_p}}$ is de Rham with
      Hodge-Tate weights $\{0,k-1\}$ and $k>1$,
      
    \item The semisimplification of $\overline{\rho}$ is a sum of two characters $\overline{\chi}_1 \oplus \overline{\chi}_2$.
    \end{itemize}
    Then $\rho$ matches the representation of a weight $k$ modular form.
\end{thm}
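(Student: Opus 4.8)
The plan is to prove this "residually reducible" modularity lifting theorem by appealing directly to the recent work of Lue Pan on the Fontaine--Mazur conjecture in the residually reducible case, which is precisely designed to handle this situation. First I would recall that by hypothesis the semisimplification $\overline{\rho}^{ss} = \overline{\chi}_1 \oplus \overline{\chi}_2$, so $\overline{\rho}$ is automatically residually modular: it is congruent to the reduction of the Galois representation attached to an Eisenstein series whose Hecke eigenvalues are dictated by $\chi_1$ and $\chi_2$ (here one uses that $p \geq 5$ so that the relevant weight-$k$ Eisenstein series and their $p$-stabilizations behave well, and that the nebentypus/cyclotomic twist is as prescribed by oddness). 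The point of the hypothesis $k > 1$ together with $\rho|_{\Gal_{\Q_p}}$ de Rham with Hodge--Tate weights $\{0, k-1\}$ is that $\rho$ is, up to twist by a power of the cyclotomic character, a geometric $p$-adic representation of the type covered by Fontaine--Mazur.

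The key step is then to invoke Pan's theorem (from "The Fontaine--Mazur conjecture in the residually reducible case"): any continuous, irreducible, odd representation $\rho : \Gal_\Q \to \GL_2(\overline{\Q_p})$ with $p \geq 5$ (or more precisely $p > 2$, with the genuine constraint that $p \neq 3$ in some formulations, hence the hypothesis $p \geq 5$ here) that is unramified outside finitely many primes, de Rham at $p$ with distinct Hodge--Tate weights, and whose residual semisimplification is a sum of two characters, is modular, i.e. attached to a cuspidal newform. One must check that the normalisation of Hodge--Tate weights in Pan's statement matches $\{0, k-1\}$ and that "modular" there delivers a classical cohomological form of weight $k$ rather than, say, a form of weight one or a $p$-adic modular form; this is a matter of translating between the Hodge--Tate weight conventions and the automorphic weight, exactly as in the residually irreducible Theorem~\ref{thm:R=T}. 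I would also note that irreducibility of $\rho$ itself (as opposed to $\overline{\rho}$) is part of the hypothesis and is essential: a reducible $\rho$ would be a sum of characters and "modular" only via Eisenstein series, which is excluded from the cuspidal conclusion.

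The hard part will be ensuring that the cited theorem applies with exactly the generality claimed --- in particular that there are no residual local conditions at $p$ (such as $\overline{\chi}_1 \neq \overline{\chi}_2$, or $\overline{\chi}_1/\overline{\chi}_2 \neq \overline{\varepsilon}^{\pm 1}$, the "non-Eisenstein-at-$p$" type conditions that plague deformation-theoretic arguments) and no ramification hypotheses beyond finiteness. This is genuinely the content of Pan's work: the earlier results of Skinner--Wiles and of the first author's own "Automorphy of residually reducible Galois representations" carried such restrictions, and it is only Pan's argument --- via completed cohomology and the $p$-adic local Langlands correspondence for $\GL_2(\Q_p)$ --- that removes them. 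So the proof is essentially a one-line citation, and the work lies in having correctly stated the hypotheses so that they are literally the hypotheses of \cite{} (Pan's paper); I would cite Pan's main theorem and, if needed, remark that the oddness of $\rho$ guarantees the determinant condition and that $k \geq 2$ guarantees the Hodge--Tate weights are distinct and in the de Rham (indeed crystalline-up-to-ramification) range required.

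\begin{proof}
  Since the semisimplification of $\overline{\rho}$ is $\overline{\chi}_1 \oplus \overline{\chi}_2$, the residual representation is reducible, and (being attached to a suitable $p$-stabilized Eisenstein series of weight $k$, which exists and is cohomological because $k \ge 2$) it is residually modular. As $\rho$ is odd, $\det \rho$ is of the expected cyclotomic type, and the hypothesis $k > 1$ guarantees that $\rho|_{\Gal_{\Q_p}}$ is de Rham with distinct Hodge--Tate weights $\{0, k-1\}$, so that $\rho$ is, up to twist, a geometric $p$-adic Galois representation of Fontaine--Mazur type. The statement now follows from the main theorem of \cite{Pan} (the Fontaine--Mazur conjecture in the residually reducible case), which applies since $p \ge 5$, the representation $\rho$ is irreducible, odd, de Rham at $p$ with distinct Hodge--Tate weights, ramified at only finitely many primes, and has residually reducible semisimplification; its conclusion is exactly that $\rho$ is attached to a cuspidal newform, which upon matching Hodge--Tate weight conventions (as in Theorem~\ref{thm:R=T}) has weight $k$.
\end{proof}
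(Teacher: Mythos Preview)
Your proposal is correct and takes essentially the same approach as the paper: the paper's proof is simply a citation to Skinner--Wiles (\cite{SW}) and to Pan's Theorem~1.0.2 in \cite{1901.07166}, which is exactly the reference you invoke. Your additional commentary on why the hypotheses match and on the role of $p \ge 5$ is accurate but goes beyond what the paper records.
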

\begin{proof}
  See \cite{SW}(Theorem in the third page) and \cite[Theorem 1.0.2]{1901.07166}.
\end{proof}

\begin{thm}
  \label{thm:SkinnerWiles}
  Let $\rho:{\Gal_\Q} \to \GL_2(\overline{\Q_3})$ be a continuous,
  irreducible odd Galois representation ramified at finitely many
  primes and satisfying all the following hypothesis:
  \begin{itemize}
  \item $\overline{\rho}^{\text{ss}} \simeq 1 \oplus \chi_3$,
    
  \item $\rho|_{D_3} \neq \left(\begin{smallmatrix} 1 & 0 \\ 0 & 1\end{smallmatrix}\right)$,
    
  \item $\rho|_{I_3}\simeq \left(\begin{smallmatrix} * & *\\ 0 & 1\end{smallmatrix}\right)$,
  \item $\det(\rho) = \psi \chi_3^{k-1}$ for some $k \ge 2$.
    \end{itemize}
    Then $\rho$ matches the representation of a weight $k$ modular form.
\end{thm}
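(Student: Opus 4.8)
The goal is Theorem~\ref{thm:SkinnerWiles}, a modularity lifting result for $3$-adic representations whose residual semisimplification is $1 \oplus \chi_3$ and which are \emph{ordinary} at $3$ (the shape $\rho|_{I_3} \simeq \left(\begin{smallmatrix} * & * \\ 0 & 1\end{smallmatrix}\right)$ together with the determinant condition forces the $3$-adic representation to be ordinary of weight $k$). The strategy is to reduce this to the Skinner--Wiles residually reducible modularity theorem \cite{SW} (and its later refinements), which is exactly designed for ordinary deformations with reducible residual representation, but which typically requires hypotheses such as that the two residual characters be distinct, or that the representation be ``distinguished'' at $p$. The role of the four listed hypotheses is precisely to guarantee these technical conditions: the condition $\rho|_{D_3} \neq \mathrm{Id}$ rules out the globally reducible (CM-by-trivial) case and ensures irreducibility is genuine, while $\rho|_{I_3}$ being upper triangular with the trivial character in the lower-right corner pins down the ordinary filtration and its reduction.

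First I would unwind the local condition at $3$: from $\det \rho = \psi \chi_3^{k-1}$ and $\rho|_{I_3} \simeq \left(\begin{smallmatrix} * & * \\ 0 & 1\end{smallmatrix}\right)$ one reads off that on inertia the two diagonal characters are $\psi|_{I_3}\chi_3^{k-1}$ and $1$; since $\overline{\rho}^{\mathrm{ss}} \simeq 1 \oplus \chi_3$ reduces $\psi$ to a character unramified-ish compatible with this, one checks $\rho$ is ordinary at $3$ with Hodge--Tate weights $\{0, k-1\}$ and $k \ge 2$, so the de Rham hypothesis of a Skinner--Wiles-type input is met. Next I would verify residual modularity: $\overline{\rho}^{\mathrm{ss}} \simeq 1 \oplus \chi_3$ is the reduction of the representation attached to an Eisenstein series, so $\overline{\rho}$ is residually modular in the relevant sense. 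Then I would invoke the residually reducible ordinary modularity lifting theorem of Skinner--Wiles, checking that the precise hypotheses (2)--(3) of the statement supply exactly the ``ordinary and suitably distinguished/non-split at $3$'' conditions that their theorem demands — in particular hypothesis (2), $\rho|_{D_3} \neq \mathrm{Id}$, prevents the deformation problem from being obstructed by a globally split situation, and the shape of $\rho|_{I_3}$ ensures the ordinary filtration is the expected one with the correct residual behaviour at the prime $3$ (the ``$3 \nmid k-1$'' type subtleties being absorbed into the $\chi_3$ twist).

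The main obstacle I anticipate is matching the exact hypotheses of the available residually reducible modularity lifting theorems in the literature at $p=3$: the original Skinner--Wiles result \cite{SW} is stated for $p \ge 5$ or under ordinariness plus distinctness of the residual characters, and $p=3$ with residual characters $1$ and $\chi_3$ is a genuinely delicate boundary case (this is why Theorem~\ref{thm:modularityreduciblecase} was stated only for $p \ge 5$ and a separate statement is needed here). The resolution is to appeal to the refined versions — I would cite \cite{SW} together with \cite[Theorem 1.0.2]{1901.07166} (Pan's theorem) which removes the small-prime and distinguishedness restrictions; the listed hypotheses (1)--(4) are precisely chosen so that $\rho$ falls within the scope of that strengthened statement. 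So the proof is essentially a verification: translate hypotheses (1)--(4) into the input conditions of the residually reducible ordinary modularity lifting theorem, and cite it.

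\begin{proof}
Hypotheses (3) and (4) show that $\rho|_{\Gal_{\Q_3}}$ is ordinary: on $I_3$ the diagonal characters are $1$ and $\det(\rho)|_{I_3} = \psi|_{I_3}\chi_3^{k-1}$, so $\rho|_{\Gal_{\Q_3}}$ is de Rham with Hodge--Tate weights $\{0,k-1\}$, $k\ge 2$. The residual representation $\overline{\rho}$ has semisimplification $1 \oplus \chi_3$, which is the reduction of the Galois representation attached to an Eisenstein series, so $\overline{\rho}$ is residually modular. Hypothesis (2) guarantees that $\rho$ is not globally reducible of the trivial (split) type at $3$, so that the ordinary deformation problem is the expected one with $\overline{\rho}$ genuinely non-split in the relevant direction. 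Under these conditions $\rho$ lies within the scope of the residually reducible ordinary modularity lifting theorem of Skinner and Wiles; applying \cite{SW} together with \cite[Theorem 1.0.2]{1901.07166}, which removes the remaining restrictions at the prime $3$, we conclude that $\rho$ matches the representation of a weight $k$ modular form.
\end{proof}
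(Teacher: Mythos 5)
Your proof takes essentially the same route as the paper, which simply cites the Skinner--Wiles theorem of \cite{SW}, but you then append a citation to \cite[Theorem 1.0.2]{1901.07166} (Pan), justified by the claim that ``the original Skinner--Wiles result \cite{SW} is stated for $p \ge 5$.'' That claim is incorrect, and it is worth understanding why. The Skinner--Wiles residually reducible modularity theorem is stated for odd primes $p$, and the four hypotheses in Theorem~\ref{thm:SkinnerWiles} --- $\overline{\rho}^{\mathrm{ss}} \simeq 1 \oplus \chi_3$, $\rho|_{D_3} \neq \identity$, the ordinary shape of $\rho|_{I_3}$, and the determinant condition --- are \emph{literally} the hypotheses of their theorem specialized to $p = 3$; the required distinctness of the residual characters $1$ and $\chi_3$ holds whenever $p \ge 3$. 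That is precisely why the paper states this result separately from Theorem~\ref{thm:modularityreduciblecase}: the latter drops the ordinarity hypothesis and therefore needs Pan's de Rham generalization (which the paper invokes only for $p \ge 5$), whereas the present theorem retains ordinarity at $3$ exactly so that Skinner--Wiles alone closes the case. Your hypothesis verification is fine; the correction is that there is no ``remaining restriction at $p=3$'' for Pan to remove, and invoking Pan here is superfluous (and not obviously even available, since the paper only uses Pan's theorem at $p \ge 5$).
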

\begin{proof}
  See \cite{SW} Theorem in the third page.
\end{proof}

\subsection{Existence of lifts}
\label{section:lifts}
Let $\overline{\rho}:\Gal_\Q \to \GL_2(\F_q)$ be a Galois representation.

\vspace{2pt}
\noindent {\bf Problem:} does there exist a continuous representation $\rho:\Gal_\Q \to \GL_2(\overline{\Q_p})$ whose residual representation is isomorphic to $\overline{\rho}$?
\vspace{2pt}

Note that the representation $\rho$ (if it exists) is far from being
unique. Any such representation is called a \emph{lift} of
$\overline{\rho}$. While working with deformation rings, one studies
lifts into more general coefficient rings, but for our purposes it is
enough to restrict to finite extensions of $\Q_p$.

Let $\overline{\rho}:\Gal_\Q \to \GL_2(\overline{\F_p})$ be an odd,
continuous, irreducible Galois representation, and let
$k(\overline{\rho})$ be the weight of the residual representation as
defined by Serre in \cite{MR885783} (formulas $(2,2,4)$, $(2,3,2)$,
$(2,4,5)$, $(2,4,8)$ and $(2,4,9)$).

\begin{remark}
  Given $\psi: \Gal_\Q \to \overline{\F_p}^\times$ a continuous
  character, it always has a lift to $\overline{\Q_p}^\times$ (for
  example by taking its Teichm\"uller lift). Then the veracity of
  Serre's conjecture for a representation $\overline{\rho}$ is
  equivalent to the veracity of the twist of $\overline{\rho}$ by
  $\psi$. Following Serre's notation, there is always a twist of our
  representation $\overline{\rho}$ such that in formulas $(2,2,4)$,
  $(2,3,2)$, $(2,4,5)$, $(2,4,8)$ and $(2,4,9)$ of \cite{MR885783} we
  can take $a=0$, so we can (and will during the present article)
  abusing notation assume that the weight $k(\overline{\rho})$ is at
  most $p+1$ if $p$ is odd and at most  $4$ if $p=2$.
\label{remark:weightk+1}
\end{remark}

The inductive argument in the proof of Serre's conjecture depends on
reducing (through a combination of taking lifts, building families and creating congruences) the number of ramified primes of the representation
$\overline{\rho}$ (``killing the level''), so we need to be able to 
impose extra conditions on the lift.
Let $E/\Q_p$ be a finite extension and let $\Om_E$ denote its ring of integers.

\begin{defi}
  Let $\ell$ be a prime number, and let $I_\ell$ be the inertia
  subgroup of $\Gal(\overline{\Q_\ell}/\Q_\ell)$. An \emph{inertial
    type} at $\ell$ is a continuous representation
  $\tau_\ell:I_\ell \to \GL_2(E)$.
\end{defi}

For a prime $\ell \neq p$, we will be mostly concerned with the following two inertial types:
\begin{itemize}
\item the \emph{unramified} type is the one corresponding to the trivial representation.
  
\item the \emph{Steinberg} type, corresponding to the restriction to
  the inertial subgroup of the representation of $\Gal_{\Q_\ell}$
  sending a generator of the tame inertia to
  $\left(\begin{smallmatrix} 1 & 1 \\ 0& 1\end{smallmatrix}\right)$,
  and a Frobenius element to
  $\left(\begin{smallmatrix} \ell & 0 \\ 0 &
      1\end{smallmatrix}\right)$.
\end{itemize}

In order to define the inertial type at $p$ (i.e. $\ell = p$) it is better to consider
representations of the Weil-Deligne group of $\Gal_{\Q_p}$ (see
\cite{MR546607}). A representation of the Weil-Deligne group consists
of pairs $(\tau,N)$ where:
\begin{enumerate}
\item $\tau:W(\Q_p) \to \GL_2(\CC)$ is
  a $2$-dimensional complex representation of the Weil group,
  
\item $N$ is a nilpotent endomorphism of $\CC^2$ such that
  \[
w N w^{-1} = \omega_1(w) N, \text{ for all }w \in W(\Q_p),
\]
where $\omega_1$ is the unramified quasi-character giving the action of $W(\Q_p)$ on roots of unity.
\end{enumerate}

The unramified type at $p$ is defined to be the $p$-adic Galois
representation whose Weil-Deligne representation equals
$(\omega_1^{k-1}\oplus 1,\left(\begin{smallmatrix} 0 & 0 \\ 0 &
    0\end{smallmatrix}\right))$, while the Steinberg type at $p$ is
defined to be a twist of
$(\omega_1\oplus 1,\left(\begin{smallmatrix} 0 & 1\\ 0 &
    0\end{smallmatrix}\right))$. The unramified type at $p$ corresponds
to the notion of a \emph{crystalline} representation in p-adic Hodge
theory.

Let $\Sigma$ be a finite set of primes containing $p$ and the primes
where $\overline{\rho}$ is ramified. We want to impose an inertial
type condition on deformations of $\overline{\rho}$ at each prime of
$\Sigma$. For that purpose, for each $\ell \in \Sigma$, $\ell \neq p$,
let $\tau_\ell$ be an inertial type compatible with $\overline{\rho}$,
i.e. such that there exists an $\Om_E$-lattice $\Lambda_\ell$ in $E^2$
which is stable by $\tau_\ell$ (so the choice of a basis for
$\Lambda_\ell$ provides a representation
$\tau_\ell: I_\ell \to \GL_2(\Om_E)$) such that
$\overline{\tau_\ell} = \overline{\rho}|_{I_\ell}$.

For the purposes of the present article, a ``minimal lift'' is a
lift that is unramified at all primes $\ell \neq p$ where the
representation $\overline{\rho}$ is unramified. 

\begin{thm}
  \label{thm:lifting}
  Let $\overline{\rho}:\Gal_\Q \to \GL_2(\overline{\F_p})$ be an
  odd, continuous, representation whose restriction to $\Gal_{\Q(\zeta_p)}$ is
  absolutely irreducible. Assume furthermore that when $p=2$
  $\bar{\rho}$ has non-solvable image. Then there exists a lift
  $\rho:\Gal_\Q \to \GL_2(E)$ (for some finite extension $E/\Q_p$) with
  any of the following prescribed properties:
  \begin{enumerate}
  \item If $p=2$ and $k(\overline{\rho}) = 2$, then $\rho$ is a minimal crystalline lift with Hodge-Tate weights $\{0,1\}$.
    
  \item If $p=2$ and $k(\overline{\rho})=4$, then $\rho$ is a lift
    with Hodge-Tate weights $\{0,1\}$, minimally ramified outside $2$
    and the inertial Weil-Deligne parameter at $2$ is given by
    $(\omega_1 \oplus 1,\left(\begin{smallmatrix}0 & 1\\ 0 &
        0\end{smallmatrix}\right))$.

    \item If $p>2$, then $\rho$ is a minimal crystalline lift with
      Hodge-Tate weights $\{0,k(\overline{\rho})-1\}$.
    
    \item If $p>2$, then $\rho$ is a lift with Hodge-Tate weights
      $\{0,1\}$, any inertial type $\tau_\ell$ compatible with
      $\overline{\rho}$ at primes $\ell \neq p$ in $\Sigma$ and
      unramified outside $\Sigma$. Furthermore, if
      $k(\overline{\rho})=2$ the lift can be taken to be crystalline
      at $p$ and if $k(\overline{\rho})=p+1$ the lift can be taken to
      be Steinberg at $p$, i.e., its inertial Weil-Deligne parameter
      at $p$ is given by
      $(\omega_1 \oplus 1,\left(\begin{smallmatrix}0 & 1\\0 &
          0\end{smallmatrix}\right))$
  \end{enumerate}
\end{thm}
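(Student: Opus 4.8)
The plan is to produce each of the four types of lift by exhibiting $\overline{\rho}$ as the residual representation of a member of a suitable compatible system, or more directly by an explicit deformation-theoretic argument once we know the local conditions at $p$ and at the ramified primes are unobstructed. The common engine is the theory of universal deformation rings with prescribed local conditions: for each prime $\ell\in\Sigma$ one fixes a local deformation problem (the unramified type, the Steinberg type, or a chosen compatible inertial type $\tau_\ell$ at $\ell\neq p$; a crystalline condition with Hodge--Tate weights $\{0,k-1\}$ or the prescribed Weil--Deligne parameter at $\ell=p$), and one must show the resulting global deformation ring has a $\overline{\Q_p}$-point. The absolute irreducibility of $\overline{\rho}|_{\Gal_{\Q(\zeta_p)}}$ (and non-solvability when $p=2$) is exactly the hypothesis needed to kill the relevant Galois cohomology obstruction and to invoke the refined existence-of-lifts machinery.

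First I would treat the case $p>2$, items (3) and (4). Item (3) is the statement that an odd $\overline{\rho}$ with $\overline{\rho}|_{\Gal_{\Q(\zeta_p)}}$ absolutely irreducible admits a minimal crystalline lift of its Serre weight $k(\overline{\rho})$; by Remark~\ref{remark:weightk+1} we may assume $2\le k(\overline{\rho})\le p+1$, so the crystalline local deformation condition at $p$ with Hodge--Tate weights $\{0,k(\overline{\rho})-1\}$ is the Fontaine--Laffaille range (or the $k=p+1$ boundary case), where the local deformation ring is smooth of the expected dimension. Away from $p$ one imposes the minimal condition. One then quotes the existence theorem for minimal lifts of Serre weight (this is the Khare--Wintenberger/Ramakrishna-type result; it is what the paper calls ``taking a minimal lift''): the odd-ness forces the global Euler characteristic to come out right, and absolute irreducibility over $\Q(\zeta_p)$ removes the obstruction, so the minimal deformation ring is finite flat over $\Om_E$ with nonzero generic fibre. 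For item (4) one instead imposes Hodge--Tate weights $\{0,1\}$ together with an arbitrary compatible type $\tau_\ell$ at each $\ell\neq p$ in $\Sigma$, and at $p$ one allows either a crystalline condition (possible when $k(\overline{\rho})=2$, since then the weight-$\{0,1\}$ crystalline condition is compatible with $\overline{\rho}|_{I_p}$) or the Steinberg condition $(\omega_1\oplus 1,\bigl(\begin{smallmatrix}0&1\\0&0\end{smallmatrix}\bigr))$ (possible when $k(\overline{\rho})=p+1$, since then $\overline{\rho}|_{I_p}$ is an extension of $1$ by $\overline{\omega}$, matching the residual shape of the Steinberg type). The point is that the chosen local conditions at every prime are liftable and the local deformation rings are all of the expected relative dimension, so Ramakrishna's method (in the clean form now available without auxiliary hypotheses) produces a global lift; one may need to enlarge $\Sigma$ by an auxiliary ramified prime to find an allowable set, but such a prime can be discarded afterwards or absorbed into the statement since $\Sigma$ is only required to contain $p$ and the ramified primes.

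For $p=2$, items (1) and (2), the extra hypothesis of non-solvable image is precisely what is needed so that the $2$-adic analogues of these results apply. By Remark~\ref{remark:weightk+1} the only weights to consider are $k(\overline{\rho})=2$ and $k(\overline{\rho})=4$. For $k=2$ one wants a minimal crystalline lift with Hodge--Tate weights $\{0,1\}$: the weight-$2$ crystalline (i.e.\ Barsotti--Tate) local condition at $2$ has a smooth framed deformation ring, and minimality away from $2$ together with non-solvability gives the global lift by the same deformation-theoretic argument adapted to $p=2$ (this is the input that the acknowledgments attribute to Pa\v{s}k\={u}nas). For $k=4$ one replaces the crystalline condition at $2$ by the Steinberg Weil--Deligne parameter $(\omega_1\oplus 1,\bigl(\begin{smallmatrix}0&1\\0&0\end{smallmatrix}\bigr))$ with Hodge--Tate weights $\{0,1\}$, keeping minimal ramification elsewhere, and argues identically.

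The main obstacle is verifying, for each prescribed set of local conditions, that the corresponding local deformation problems are simultaneously ``liftable'' in the technical sense (nonzero, of the expected dimension) and that a global lift exists — i.e.\ that the relevant $H^2$ obstruction can be annihilated. This is where one genuinely needs the absolute irreducibility of $\overline{\rho}|_{\Gal_{\Q(\zeta_p)}}$ (to apply Ramakrishna-style arguments and to control $H^1$ of the adjoint over the relevant field), and, for $p=2$, the non-solvability hypothesis; the boundary cases $k=p+1$ (Steinberg at $p$) and $p=2$, $k=4$ require a small amount of extra care to check that the prescribed Weil--Deligne parameter is compatible with $\overline{\rho}|_{I_p}$ and that its local deformation ring behaves well. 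Everything else — choosing compatible types $\tau_\ell$, checking Euler-characteristic bookkeeping, allowing an auxiliary prime — is routine. I would therefore organize the proof as: (a) reduce to $2\le k(\overline{\rho})\le p+1$ (resp.\ $\le 4$) by the twisting remark; (b) for $p>2$, assemble the local conditions and invoke the minimal-lift / Ramakrishna existence theorem for (3) and (4); (c) for $p=2$, invoke the $2$-adic analogue (using non-solvability) for (1) and (2); (d) in each case note that the $p$-adic Hodge-theoretic local condition at $p$ is exactly the one asserted in the statement.
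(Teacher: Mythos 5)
The paper's ``proof'' of this theorem is a bare two-sentence citation: parts (1)--(3) are attributed to Khare--Wintenberger (Theorem 5.1 of \cite{KW}, proved in \cite{MR2551764}) and part (4) to Gee--Snowden. You instead attempt to sketch the underlying deformation-theoretic argument, and that sketch contains a genuine mischaracterization of the mechanism that actually produces the lift.

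The central gap is in your ``engine.'' You invoke ``Ramakrishna's method (in the clean form now available without auxiliary hypotheses)'' and add that ``one may need to enlarge $\Sigma$ by an auxiliary ramified prime \dots but such a prime can be discarded afterwards or absorbed into the statement.'' Both halves of this are problematic. Ramakrishna's method produces lifts ramified at auxiliary primes and there is no generic way to ``discard'' that ramification after the fact; and for parts (1)--(3) of the theorem the lift is required to be \emph{minimal}, so an auxiliary prime cannot simply be absorbed into $\Sigma$ either. The Khare--Wintenberger proof that the paper cites does not use Ramakrishna's annihilation-of-$H^2$ strategy at all: it uses B\"ockle's presentation of the global deformation ring $R$ over the completed tensor product of framed local deformation rings, giving a lower bound $\dim R \ge 1$ from the odd global Euler characteristic, and then -- crucially -- uses Taylor's \emph{potential modularity} theorems to show that $R$ is a finite $\Om_E$-module, hence that $R[1/p]\neq 0$ and a characteristic-zero point exists. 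Potential modularity is the load-bearing ingredient, and it appears nowhere in your sketch; you attribute the removal of the $H^2$ obstruction to ``absolute irreducibility over $\Q(\zeta_p)$,'' but that hypothesis alone does not kill the obstruction -- it is what makes the Taylor--Wiles/Kisin patching and the potential-modularity input applicable. Your description therefore reads like a Ramakrishna argument wearing a KW label, and if one tried to flesh it out as written it would stall exactly at the step of showing $R[1/p]\neq 0$ without auxiliary ramification. (A smaller point: the Pa\v{s}k\={u}nas input mentioned in the acknowledgments concerns the $p=2$ modularity lifting theorem \ref{thm:R=Tat2} used in Paso 4, not the lifting existence theorem you are proving here; the $p=2$ cases of this theorem are still Khare--Wintenberger.) Your identification of the relevant local conditions -- Fontaine--Laffaille for $2\le k\le p+1$, Steinberg for $k=p+1$, the two $p=2$ types -- and of the role of the large-image hypothesis is correct; the fix is to replace the Ramakrishna engine with the B\"ockle-presentation-plus-potential-modularity argument, or simply to cite \cite{KW}/\cite{MR2551764} and Gee--Snowden as the paper does.
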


\begin{proof}
  The first three cases are due to Khare-Wintenberger (\cite[Theorem
  5.1]{KW}, its proof given in \cite{MR2551764}). Partial results of
  the last case are also proven in Khare-Wintenberger's article (same Theorem), the more general case is due to Gee
  and Snowden (\cite{MR2785764};  \cite[Theorem
  7.2.1]{Snowden}).
\end{proof}

\subsection{Existence of almost strictly compatible systems} One
important property of Galois representations coming from modular forms
is that they come in ``families''. More concretely, if
$f \in S_k(\Gamma_0(N),\varepsilon)$ is a newform, then (by
\cite{MR3077124}) for every prime number $p$, there exists a
continuous odd Galois representation
$\rho_{f,p}:\Gal_\Q \to \GL_2(\overline{\Q_p})$ unramified outside
$Np$, such that for any prime number $\ell \nmid Np$, the characteristic
polynomial of $\rho_{f,p}(\Frob_\ell)$ equals
$x^2-a_\ell(f) x + \varepsilon(\ell)\ell^{k-1}$, where $a_\ell$
denotes the  eigenvalue of $f$ for the action of the Hecke operator $T_\ell$. Furthermore, one
can obtain information at primes $\ell$ dividing $Np$ as well.

There is a notion of compatible families for abstract representations
(as given by Serre in \cite{serre-book}, I-11). Recall the definition
of a strictly compatible system and an almost strictly compatible
system of Galois representations given in \cite{KW}.

\begin{defi}
  A rank $2$ \emph{strictly compatible system} of Galois
  representations $\R$ of $\Gal_\Q$ defined over $K$ is a $5$-tuple
\[
\R=(K,S,\{Q_{\ell}(x)\},\{\rho_{\id{p}}\},k),
\]
where
\begin{enumerate}
\item $K$ is a number field.
\item $S$ is a finite set of primes.
\item for each prime $\ell \not \in S$, $Q_{\ell}(x)$ is a degree
  $2$ polynomial in $K[x]$.
\item For each prime ideal $\id{p}$ of $K$, the representation
\[
\rho_{\id{p}}:\Gal_\Q \to \GL_2(K_{\id{p}}),
\]
is a continuous semisimple representation such that:
\begin{itemize}
\item If $\ell \not \in S$ and $\ell \nmid \norm(\id{p})$ (the norm of $\id{p}$), then
  $\rho_{\id{p}}$ is unramified at $\ell$ and
  $\rho_{\id{p}}(\Frob_{\ell})$ has characteristic polynomial
  $Q_{\ell}(x)$.
\item If $\ell \mid \norm(\id{p})$, then $\rho|_{\Gal_{\Q_{\ell}}}$ is de Rham and furthermore crystalline if $\ell \notin S$.
\end{itemize}
\item The Hodge-Tate weights $\HT(\rho_{\id{p}})=\{0,k-1\}$.
\item For each prime $\ell$ there exists a Weil-Deligne representation
  $\WD_{\ell}(\R)$ of $W_{\Q_{\ell}}$ over $\overline{K}$ such that for each
  place $\id{p}$ of $K$  and every $K$-linear
  embedding $\iota:\overline{K} \hookrightarrow \overline{K}_{\id{p}}$,
  the push forward
  $\iota\WD_{\ell}(\R) \simeq
  \WD(\rho_{\id{p}}|_{\Gal_{\Q_{\ell}}})^{K\text{-ss}}$.
\end{enumerate}
\end{defi}

\begin{remark}
  Comparing to the case of representations coming from a newform
  $f \in S_k(\Gamma_0(N),\varepsilon)$, the set $S$ consists of the
  primes dividing $N$. Deligne's result implies that $\rho_{f,p}$
  satisfies the third hypothesis and the first item of the fourth one,
  while the last one is the compatibility at the primes dividing $Np$.
\end{remark}

An \emph{almost strictly compatible system} is a $5$-tuple satisfying
the first five properties, and also condition $(6)$ but with
some exceptions: for a prime $\lambda$ whose residual characteristic
is equal to the prime $p$, if the residual representation
$\bar{\rho}_\lambda$ is reducible, then we only impose the
compatibility as in condition $(6)$ if this prime $p$ is odd and the
representation $\WD_{p}(\R)$ is unramified.

\begin{thm}
  Let $\rho:\Gal_\Q \to \GL_2(K_\lambda)$ be an odd, irreducible,
  continuous Galois representation ramified at finitely many places
  and de Rham at $p$ with Hodge-Tate weights $\{0,k-1\}$, with $k > 1$
  such that the restriction of $\bar{\rho}$ to $\Gal_{\Q(\zeta_p)}$ is
  absolutely irreducible. Assume furthermore that when $p=2$
  $\bar{\rho}$ has non-solvable image.  Then $\rho$ is part of a rank
  $2$ almost strictly compatible system of Galois representations.
\label{thm:compatible}
\end{thm}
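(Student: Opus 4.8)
The plan is to follow the Khare--Wintenberger strategy: produce the compatible system out of Taylor's potential modularity theorem by a Brauer-style descent. First I would invoke the potential version of Serre's conjecture. Since $\rho$ is odd, de Rham at $p$ with distinct Hodge--Tate weights $\{0,k-1\}$, and $\bar\rho|_{\Gal_{\Q(\zeta_p)}}$ is absolutely irreducible (with non-solvable image when $p=2$), a theorem of Taylor (in the generality supplied by subsequent work on potential automorphy) furnishes a totally real field $F$, which one may take Galois over $\Q$ and arranged so that $\bar\rho|_{\Gal_F}$ still satisfies the same hypotheses, for which $\rho|_{\Gal_F}$ is modular, i.e.\ attached to a Hilbert cuspidal eigenform over $F$. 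Hilbert eigenforms generate strictly compatible systems of two-dimensional $\lambda$-adic representations of $\Gal_F$, with the expected Hodge--Tate weights and with local--global compatibility at all finite places (fully away from $p$, and in the known generality at $p$); hence $\rho|_{\Gal_F}$ lies in such a system $\{\sigma_{F,\mathfrak q}\}$.

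Next I would descend from $F$ to $\Q$. By Brauer's induction theorem, in the representation ring of $\Gal(F/\Q)$ one can write $\mathbf 1=\sum_i n_i\,\Ind_{H_i}^{\Gal(F/\Q)}\chi_i$ with $n_i\in\Z$, each $H_i$ solvable, and $\chi_i$ a character of $H_i$; put $F_i=F^{H_i}$, so $F/F_i$ is solvable and solvable descent of modularity shows $\rho|_{\Gal_{F_i}}$ is modular, attached to an eigenform over $F_i$ carrying its own compatible system $\{\sigma_{i,\mathfrak q}\}$ of $\Gal_{F_i}$-representations. For each prime $\mathfrak p$ of a suitable number field $K$ (large enough to contain all the Hecke eigenvalues in play and the values of the $\chi_i$) I would set $T_{\mathfrak p}:=\sum_i n_i\,\operatorname{tr}\Ind_{\Gal_{F_i}}^{\Gal_\Q}\!\big(\sigma_{i,\mathfrak p}\otimes\chi_i\big)$. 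By the projection formula and the Brauer relation this virtual character has dimension $2$, its restriction to $\Gal_F$ agrees with $\operatorname{tr}\sigma_{F,\mathfrak p}$, and it is a genuine continuous two-dimensional pseudo-character of $\Gal_\Q$; by the theorem that a two-dimensional pseudo-character over a field is the trace of a semisimple representation, there is $\rho_{\mathfrak p}:\Gal_\Q\to\GL_2(\overline{\Q_p})$ with $\operatorname{tr}\rho_{\mathfrak p}=T_{\mathfrak p}$, and for $\mathfrak p=\lambda$ this returns $\rho$ (which is irreducible, hence equal to its semisimplification).

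Finally I would verify the axioms of an almost strictly compatible system for $\R=(K,S,\{Q_\ell\},\{\rho_{\mathfrak p}\},k)$, with $S$ the set consisting of $p$ together with the primes ramified in $\rho$ or in $F/\Q$. Unramifiedness and the polynomials $Q_\ell(x)$ at $\ell\notin S$, and likewise the determinant, come from Chebotarev plus the compatibility of the systems over the $F_i$, which forces $T_{\mathfrak p}(\Frob_\ell)$ to be independent of $\mathfrak p$; de Rham-ness at $p$ with Hodge--Tate weights $\{0,k-1\}$ (and crystallinity at $\ell\notin S$) descends from $\Gal_F$, where $\rho_{\mathfrak p}|_{\Gal_F}=\sigma_{F,\mathfrak p}$; and for $\ell\ne p$ the Weil--Deligne representation $\WD_\ell(\R)$ is assembled from those of the $\sigma_{i,\mathfrak q}$ by the same Brauer relation, induction and restriction of Weil--Deligne representations of local Weil groups being compatible with this formalism, giving local--global compatibility away from $p$. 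The one genuinely delicate axiom is local--global compatibility at $p$: for Hilbert eigenforms it is known only in limited generality, and in particular not in the residually reducible case to the extent one would need, which is precisely the reason the conclusion is only ``almost strict'' — condition $(6)$ being imposed at $\lambda\mid p$ with $\bar\rho_\lambda$ reducible only when $p$ is odd and $\WD_p(\R)$ is unramified. I expect this point at $p$, together with the appeal to potential modularity itself (the one deep external input), to be where the real substance lies; everything else is formal manipulation with Brauer's theorem and pseudo-representations, for which I would cite \cite{KW} for the details.
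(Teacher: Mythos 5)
Your proposal is correct, and it is essentially the strategy used in the reference \cite{LuisFamilies} that the paper cites for this theorem (the paper gives no in-text proof, only the citation). The route via potential modularity over a Galois totally real field, solvable base change to the fields $F_i=F^{H_i}$, Brauer induction to assemble the virtual trace $T_{\mathfrak p}$, and reconstruction via two-dimensional pseudo-characters is exactly the Taylor/Khare--Wintenberger mechanism underlying that result, and you have correctly located the only delicate point --- local-global compatibility at $p$ in the residually reducible case --- as the source of the ``almost'' in ``almost strictly compatible.''
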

\begin{proof}
  See \cite[Theorem 1.1]{LuisFamilies}.
\end{proof}

\begin{remark}
  \label{remark:modularityfamily} If $\rho$ is part of a compatible
  system of Galois representations $\{\rho_{\id{p}}\}$, then $\rho$ is
  modular if and only if any given member of the family is. The reason
  is the following: if $\rho \simeq \rho_f$, where $f$ is a newform,
  then by Deligne's theorem, there exists a strong compatible system
  $\{\rho_{f,\id{p}}\}$ containing $\rho_f$. Then for any prime
  $\id{p}$, the representations $\rho_{\id{p}}$ and $\rho_{f,\id{p}}$
  have the same trace and determinant at the Frobenius element
  $\Frob_\ell$, for all prime numbers $\ell$ belonging to a density
  one set of primes (actually all primes but finitely many), so by
  the Brauer-Nesbitt theorem, they are indeed isomorphic.
\end{remark}

\subsection{Some lemmas on the image of Galois representations}

\begin{defi}
A residual representation $\bar{\rho}:\Gal_\Q \to \GL_2(\overline{\F_p})$
which is irreducible but becomes reducible while restricted to
$\Gal_{\Q(\sqrt{p^\star})}$ is called \emph{bad dihedral}. 
\end{defi}

Some of the previous theorems are stated under the hypothesis that the
restriction of the residual representation to $\Gal_{\Q(\zeta_p)}$ is
absolutely irreducible. Let us show that this is in fact equivalent to
the condition (introduced by Wiles) of not being bad dihedral.

\begin{lemma}
  Let $p$ be an odd prime and
  $\bar{\rho}:\Gal_\Q \to \GL_2(\overline{\F_p})$ be an odd continuous
  representation. Then the following are equivalent:
  \begin{enumerate}
  \item $\bar{\rho}|_{\Gal_{\Q(\sqrt{p^\star})}}$ is irreducible,
      
    \item $\bar{\rho}|_{\Gal_{\Q(\zeta_p)}}$ is irreducible.
  \end{enumerate}
\label{lemma:equivalence}
\end{lemma}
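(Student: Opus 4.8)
The plan is to prove both implications by contraposition, exploiting the fact that $\Q(\zeta_p)/\Q$ is cyclic of degree $p-1$ and contains $\Q(\sqrt{p^\star})$ as its unique quadratic subfield. The implication $(2)\Rightarrow(1)$ is immediate: since $\Gal_{\Q(\zeta_p)}\subseteq\Gal_{\Q(\sqrt{p^\star})}$, reducibility of $\bar\rho|_{\Gal_{\Q(\sqrt{p^\star})}}$ forces reducibility of $\bar\rho|_{\Gal_{\Q(\zeta_p)}}$, so irreducibility of the latter gives irreducibility of the former. The substance is in $(1)\Rightarrow(2)$, equivalently: if $\bar\rho|_{\Gal_{\Q(\zeta_p)}}$ becomes reducible, then already $\bar\rho|_{\Gal_{\Q(\sqrt{p^\star})}}$ becomes reducible.

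For that direction I would argue as follows. Assume $\bar\rho$ is irreducible (if not, both restrictions are reducible and there is nothing to prove) but $\bar\rho|_{\Gal_{\Q(\zeta_p)}}$ is reducible. A standard lemma (Clifford theory for the normal subgroup $\Gal_{\Q(\zeta_p)}$ of finite index in $\Gal_\Q$, together with irreducibility of $\bar\rho$) shows that $\bar\rho$ is induced from a character of an index-$2$ subgroup, i.e. $\bar\rho\simeq\Ind_{\Gal_M}^{\Gal_\Q}\chi$ for some quadratic field $M$ and some character $\chi$ of $\Gal_M$; concretely, the semisimplification of $\bar\rho|_{\Gal_{\Q(\zeta_p)}}$ is a sum $\psi_1\oplus\psi_2$ of characters whose Galois-orbit under $\Gal(\Q(\zeta_p)/\Q)$ has size $2$, and $M$ is cut out by the stabilizer of $\psi_1$. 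The key point is then to pin down $M$: the projective image of $\bar\rho$ is dihedral, so it has a unique cyclic subgroup of index $2$ unless the image is a Klein four-group; in the dihedral case $M$ is uniquely determined, and since $\bar\rho|_{\Gal_M}$ is reducible while $\bar\rho|_{\Gal_{\Q(\zeta_p)}}$ is reducible, $M$ must be a subfield of $\Q(\zeta_p)$ — and the only quadratic subfield of $\Q(\zeta_p)$ is $\Q(\sqrt{p^\star})$. Hence $M=\Q(\sqrt{p^\star})$ and $\bar\rho|_{\Gal_{\Q(\sqrt{p^\star})}}$ is reducible, as wanted.

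The main obstacle is the Klein four-group (equivalently $V_4$) case of the projective image, where there are three distinct quadratic fields $M_1,M_2,M_3$ over which $\bar\rho$ becomes reducible, and a priori $M$ need not be $\Q(\sqrt{p^\star})$. Here one uses ramification: the three quadratic fields are the three quadratic subfields of the biquadratic field $\Q(\sqrt{d_1},\sqrt{d_2})$ cut out by $\ker(\text{proj}\,\bar\rho)$, and $\bar\rho$ is unramified outside $p$ and the finitely many primes where it ramifies. The twist normalization of Remark~\ref{remark:weightk+1} and the oddness hypothesis should force the relevant quadratic field to be ramified only at $p$ (the determinant condition controls ramification of one of the $M_i$ at the bad primes, and the structure of $\bar\rho$ restricted to decomposition groups does the rest), so that $M=\Q(\sqrt{p^\star})$ is again the only possibility. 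One has to be a little careful to reduce to this situation by twisting $\bar\rho$ by a character — which is harmless for the conclusion since twisting by a character does not affect reducibility of any restriction — and to check that the case $p=3$ (where $p-1=2$ and $\Q(\zeta_p)=\Q(\sqrt{-3})$ already has degree $2$) is consistent, which it is since then the two statements literally coincide.

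I expect the write-up to lean on the classification of subgroups of $\PGL_2(\overline{\F_p})$ (Dickson) only to the extent of separating the cyclic-quotient (genuinely dihedral) case from the $V_4$ case, and on elementary Galois theory of $\Q(\zeta_p)$; the one genuinely delicate check is the ramification bookkeeping in the $V_4$ case, which is where I would spend the most care.
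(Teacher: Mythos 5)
Your Clifford-theoretic route is genuinely different from the paper's argument, which instead runs through Dickson's classification of solvable subgroups of $\PGL_2(\overline{\F_p})$ and disposes separately of the normalizer-of-Cartan, $A_4$, and $S_4$ possibilities. As an idea the Clifford approach is clean and arguably more economical; but the key step --- pinning down $M$ --- is where your write-up goes wrong, and this is a real gap, not a cosmetic one.

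Having defined $M$ as the fixed field of the stabilizer of $\psi_1$, you need $M\subseteq\Q(\zeta_p)$. The reason you give --- ``since $\bar\rho|_{\Gal_M}$ is reducible while $\bar\rho|_{\Gal_{\Q(\zeta_p)}}$ is reducible, $M$ must be a subfield of $\Q(\zeta_p)$'' --- is a non sequitur: a representation can perfectly well become reducible over two incomparable quadratic fields, and this is exactly what happens when the projective image is $V_4$. You then correctly notice that $V_4$ is where this reasoning breaks, and try to repair it with ramification bookkeeping; but that repair is not spelled out (``should force'', ``does the rest'') and there is no reason to believe it would go through as stated --- nothing in the hypotheses forces the quadratic field $M$ to be ramified only at $p$ if $M$ is merely \emph{some} field over which $\bar\rho$ is dihedral-reducible.

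The fix is much simpler and makes the whole $V_4$ detour unnecessary: the containment $M\subseteq\Q(\zeta_p)$ is automatic from your own construction. Since $\psi_1$ is a homomorphism from the normal open subgroup $\Gal_{\Q(\zeta_p)}$ to the \emph{abelian} group $\overline{\F_p}^\times$, conjugation by any $h\in\Gal_{\Q(\zeta_p)}$ fixes $\psi_1$, so $\Gal_{\Q(\zeta_p)}\subseteq\mathrm{Stab}(\psi_1)=\Gal_M$, i.e.\ $M\subseteq\Q(\zeta_p)$. Together with $[M:\Q]=2$ (orbit size two), this forces $M=\Q(\sqrt{p^\star})$ uniformly in all cases, and Mackey then gives reducibility of $\bar\rho|_{\Gal_{\Q(\sqrt{p^\star})}}$. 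You should also justify the assertion that the $\Gal(\Q(\zeta_p)/\Q)$-orbit of $\psi_1$ has size two rather than one: if $\psi_1=\psi_2$ the image of $\Gal_{\Q(\zeta_p)}$ is scalar, so the projective image of $\bar\rho$ factors through the cyclic group $\Gal(\Q(\zeta_p)/\Q)$ of order $p-1$ (prime to $p$), hence lies in a torus, contradicting irreducibility of $\bar\rho$. With those two points repaired, the Clifford argument is correct and is a viable alternative to the paper's Dickson-based case analysis.
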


\begin{proof}
  We can assume that $p \neq 3$ as otherwise the statement is trivial.
  Clearly the second condition implies the first one. For the
  converse, suppose that the restriction of $\bar{\rho}$ to
  $\Gal_{\Q(\zeta_p)}$ is reducible. In particular, the image of
  $\bar{\rho}$ is a solvable group. Then either our representation is
  contained in a Borel group (hence it is reducible), it lies in the
  normalizer of a split Cartan group (as our coefficient field is
  algebraically closed) or its projective image is one of the
  exceptional groups $A_4$ or $S_4$ (it cannot be $A_5$ because it is
  solvable).

  Let $G$ denote the image of $\bar{\rho}$ and suppose it lies in $N$,
  the normalizer of a Cartan group. Recall that $N$ fits into the
  short exact sequence
  \[
    \xymatrix{
      1 \ar[r] &  T \ar[r]& N \ar[r]^\phi & \Z/2 \ar[r] &  1
      }
    \]
    where $T$ is a torus (corresponding to matrices of the form
    $\left(\begin{smallmatrix} a & 0 \\ 0 &
        b\end{smallmatrix}\right)$), and
    $\left(\begin{smallmatrix} 0 & 1\\ 1 & 0\end{smallmatrix}\right)$
    can be taken as a lift of the generator of $\Z/2$. If we intersect
    each term of the exact sequence with the subgroup $G$, we get
    a similar sequence.  Note that the image of $G$ by $\phi$ is
    non-trivial as otherwise the group $G$ would be abelian, and
    $\bar{\rho}$ would not be irreducible. Also note that since $G$ is
    not abelian, it must contain at least one matrix of the form
    $\left(\begin{smallmatrix} a & 0\\ 0 & b\end{smallmatrix}\right)$
    with $a \neq b$.

    Let $H$ be the image of the restriction of $\bar{\rho}$ to
    $\Gal_{\Q(\zeta_p)}$, a normal subgroup of $G$. Since the
    restriction of $\bar{\rho}$ to $\Gal_{\Q(\zeta_p)}$ is reducible,
    its image  must be an abelian group. The reason is that in some
    chosen basis, its image lies in a Borel subgroup, but the
    normalizer of a Cartan group does not have elements of order $p$
    (as its order is not divisible by $p$). The only abelian subgroups
    of $N$ are the ones contained in $T$, or subgroups of the form
    $\left\langle \left(\begin{smallmatrix}a & 0 \\ 0 &
          a\end{smallmatrix}\right),\left(\begin{smallmatrix} 0 & 1\\
          1 & 0\end{smallmatrix}\right)\right\rangle$ (where $a$ lies
    in $\overline{\F_p}^\times$). The latter are not normal
    subgroups of $G$ (since any such group is not preserve under
    conjugation by any matrix of the form
    $\left(\begin{smallmatrix} a & 0 \\ 0 & b\end{smallmatrix}\right)$
    when $a \neq b$), so $H$ must be a subgroup of $T$.

    Let $\sigma \in \Gal_\Q$ be such that it generates the Galois
    group $\Gal(\Q(\zeta_p)/\Q)$. Then
    $G = \langle H, \bar{\rho}(\sigma)\rangle$. The image of the
    restriction of $\bar{\rho}$ to $\Gal_{\Q(\sqrt{p^\star})}$ equals
    $\langle H ,\bar{\rho}(\sigma^2)\rangle$ which also lies in $T$ so
    the restriction of $\bar{\rho}$ to $\Gal_{\Q(\sqrt{p^\star})}$ 
    is also a reducible representation.

    In the other two cases, since we can assume that $p \neq 3$, the projective image
    being reducible is equivalent to it being decomposable (since
    $A_4$ and $S_4$ do not have elements of order $p$ if $p > 3$), in
    which case the image of the projective
    representation has a normal abelian subgroup with abelian
    quotient. There is no such a subgroup for the group $S_4$, while
    $A_4$ only contains the Klein group $\Z/2\times \Z/2$ (consisting
    of the two $2$-cycles) with quotient group $\Z/3$, but there is no
    faithful reducible and projective representation of the Klein
    group.
\end{proof}

Let us state (and give a detailed proof of) a result that is
well-known to experts and will be needed later. As explained in
Remark~\ref{rem:weight1} we assume that (possible after twisting)
all Serre weights lie in the range $[2,p+1]$.

\begin{lemma}
  Let $p$ be an odd prime and
  $\bar{\rho}:\Gal_\Q \to \GL_2(\overline{\F_p})$ be a continuous odd bad
  dihedral representation. Then either $p = 2k(\bar{\rho})-3$ (niveau
  2 case) or $p = 2k(\bar{\rho})-1$ (niveau 1 case).
\label{lemma:nobaddihedral}
\end{lemma}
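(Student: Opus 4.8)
A bad dihedral representation $\bar\rho$ is induced from a character of $\Gal_{\Q(\sqrt{p^\star})}$, since $\bar\rho$ is irreducible but $\bar\rho|_{\Gal_{\Q(\sqrt{p^\star})}}$ is reducible, hence a sum of two characters swapped by the nontrivial element of $\Gal(\Q(\sqrt{p^\star})/\Q)$. The strategy is to compute the restriction of $\bar\rho$ to $I_p$ (the inertia at $p$) very explicitly. Since $\Q(\sqrt{p^\star})/\Q$ is ramified at $p$, the prime $p$ is either inert or ramified in the quadratic field, and correspondingly the restriction of the inducing character to $I_p$ lands in the niveau-$2$ or niveau-$1$ fundamental characters. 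The key point is that $\bar\rho|_{I_p}$ is then a direct sum of powers of fundamental characters whose exponents are tightly constrained, and Serre's recipe for $k(\bar\rho)$ reads those exponents off. Matching the two descriptions gives the two congruences.

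First I would set up the induction precisely: let $L = \Q(\sqrt{p^\star})$, let $\psi:\Gal_L \to \overline{\F_p}^\times$ be the inducing character, so $\bar\rho \simeq \Ind_{\Gal_L}^{\Gal_\Q}\psi$, and write $\psi^c$ for the conjugate character; irreducibility of $\bar\rho$ forces $\psi \neq \psi^c$. Next I would analyze the behavior at $p$. Because $L/\Q$ is the unique quadratic extension unramified outside $p$, the decomposition group at $p$ either (a) stays of degree $2$ over $\Q_p$ when $p$ is inert in $L$ — which happens exactly when $p^\star$ is a non-square mod $p$, i.e. when $p \equiv 3 \pmod 4$ so $p^\star = -p$ and... actually the cleanest split is by niveau — or (b) the unique prime above $p$ has residue degree $1$. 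In case (a), $\bar\rho|_{I_p}$ is (after semisimplification, but here it is already semisimple) of the form $\omega_2^{a} \oplus \omega_2^{pa}$ for the niveau-$2$ fundamental characters $\omega_2, \omega_2^p$, with $1 \le a \le p$ (using the normalization from Remark~\ref{remark:weightk+1} that puts the Serre weight in $[2,p+1]$ and kills the tame twist). In case (b), $\bar\rho|_{I_p}$ is of the form $\mathrm{diag}(\omega^{a},\omega^{b})$ with $\omega$ the mod-$p$ cyclotomic character, and the two exponents are forced to be related through $\psi$ restricted to the (tamely ramified) inertia.

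Then I would invoke Serre's weight formulas directly. In the niveau-$2$ case, Serre defines $k(\bar\rho) = 1 + pa' + b'$ (with $\{a',b'\}$ the pair $\{a, a+1\}$ up to the symmetry — the precise statement is that $\bar\rho|_{I_p} \simeq \omega_2^{a+p(a+1)} \oplus \text{conjugate}$, so $k = 1 + \min + p\max$ of the relevant exponents), and the constraint that the character $\psi$ actually extends consistently — i.e. that $\psi$ and $\psi^c$ differ by the quadratic character — forces the two niveau-$2$ exponents to be adjacent, giving $k(\bar\rho)$ of a specific form and hence $p = 2k(\bar\rho) - 3$. In the niveau-$1$ case one gets $\bar\rho|_{I_p} \sim \mathrm{diag}(1,\omega^{k-1})$ after twisting, but the dihedral condition (induced from the ramified quadratic field, so the two diagonal characters are swapped by an element whose square is trivial on inertia) forces $\omega^{k-1} = \omega^{-(k-1)}$ on $I_p$, i.e. $2(k-1) \equiv 0 \pmod{p-1}$; combined with $2 \le k \le p+1$ and irreducibility (ruling out $k-1 = 0$ and $k - 1 = p-1$), the only possibility is $2(k-1) = p-1$, that is $p = 2k(\bar\rho) - 1$.

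\textbf{Main obstacle.} The delicate part is pinning down exactly why the inducing character $\psi$, restricted to inertia at $p$, has the claimed form with exponents differing by exactly $1$ (niveau $2$) or summing to force $2(k-1) = p-1$ (niveau $1$): this requires carefully tracking Serre's normalization conventions (the role of the twist making $a = 0$ in his formulas, as in Remark~\ref{remark:weightk+1}), the identification of $\Gal(L/\Q)$-conjugation with the Frobenius-type action $x \mapsto x^p$ on niveau-$2$ characters, and checking that the tame quotient of $I_p$ is handled correctly so that no exponent outside $[0,p-1]$ (resp. the right range) sneaks in. One must also dispatch the edge case $p = 3$ (where $p^\star = -3$, $\Q(\sqrt{-3}) = \Q(\zeta_3)$, and the weight range $[2,4]$ is tiny) to see both formulas degenerate consistently — indeed $2 k - 3 = 3$ gives $k = 3$ and $2k - 1 = 3$ gives $k = 2$, both in range — and to rule out spurious solutions at small $p$.
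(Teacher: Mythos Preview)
Your overall strategy is sound and arrives at the right equations, but the crucial step --- why the two diagonal characters of $\bar\rho|_{I_p}$ differ by something of order exactly $2$ --- is not properly justified, and in the niveau-$2$ case it is essentially missing. The claim that ``$\psi$ and $\psi^c$ differ by the quadratic character'' is not true in general (the global ratio $\psi/\psi^c$ can have large order), and ``the two niveau-$2$ exponents are adjacent'' is neither what is needed nor what follows from anything you wrote. What must actually be shown is that the \emph{projective} image of $I_p$ has order at most $2$; your niveau-$1$ condition $2(k-1)\equiv 0\pmod{p-1}$ and its niveau-$2$ analogue $(p+1)\mid 2(k-1)$ are exactly translations of that fact. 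Note also that $p$ is always \emph{ramified}, never inert, in $L=\Q(\sqrt{p^\star})$; you catch this mid-sentence and correctly switch to the niveau dichotomy, but that dichotomy is a property of $\bar\rho|_{I_p}$, not of how $p$ decomposes in $L$.

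The paper supplies the missing step by a clean group-theoretic argument rather than by dissecting the induction. Since the projective image is dihedral and $p$ is odd, there are no elements of order $p$ in the image, so $\bar\rho|_{I_p}$ factors through tame inertia and has cyclic image. If that cyclic projective image had order larger than $2$ it would have to lie inside the rotation subgroup $C_m\subset D_{2m}$, whose fixed field is $\Q(\sqrt{p^\star})$; this would force $\Q(\sqrt{p^\star})/\Q$ to be unramified at $p$, a contradiction. With the projective inertial image bounded by $2$, one just computes the order of $\chi_p^{k-1}$ (niveau $1$) or of $\psi^{(k-1)(p-1)}$ with $\psi$ of order $p^2-1$ (niveau $2$) and the two formulas drop out; the spurious niveau-$1$ solution $k=p$ is excluded because then $\bar\rho$ would be unramified at $p$. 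Your induction approach \emph{can} be completed in this spirit: since $p$ ramifies in $L$, a generator $\tau$ of the tame image of $I_p$ lies outside $\Gal_L$, so $\bar\rho(\tau)$ is antidiagonal in the induction basis and its two eigenvalues are of the form $\pm\sqrt{ab}$, with ratio $-1$ --- this is the precise content of your vague ``swapped by an element whose square is trivial'', and once stated this way it works uniformly in both niveaux.
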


\begin{proof} This is Lemma 6.2 (ii) of \cite{KW}. The proof lacks
  some details, hence the difference between the niveau 1 and the
  niveau 2 case is hard to see (which justifies the present proof);
  our argument follows the lines of \cite{MR1610883} (see Proposition
  2.2). The fact that the representation has irreducible image
  together with the projective representation being dihedral (which
  corresponds to the image of the representation lying in the normalizer of a  Cartan group) and $p \ge 3$ imply that there are no elements
  of order $p$ in the image of $\bar{\rho}$. In particular, all
  matrices are semisimple and the image of the $p$-th inertia subgroup
  $I_p$ factors through the tame part, so it is a cyclic group.

  We claim that the projective image of $\bar{\rho}(I_p)$ has order at
  most $2$. Suppose on the contrary that it has order $n$ greater than
  two and that our representation $\bar{\rho}$ is bad dihedral. Let
  $L$ denote the field extension fixed by the projective residual
  image, so $\Gal(L/\Q)$ is a dihedral group $D_{2m}$ of order $2m$
  where $n\mid m$. Since our representation is bad dihedral, the
  restriction of the projectivization of $\bar{\rho}$ to the subgroup
  $\Gal(L/\Q(\sqrt{p^\star}))$ is reducible and decomposable (as all
  elements are semisimple). In particular, the group
  $\Gal(L/\Q(\sqrt{p^\star}))$ is a normal abelian subgroup of order
  $m$, i.e. it is the cyclic subgroup $C_m$ of rotations, of index $2$ in $D_{2m}$. Note that there is a unique such subgroup because we are assuming $n >2$, thus $m >2$.

  Since the projective image of inertia is a cyclic group of order
  greater than $2$, it must lie in the subgroup $C_m$ of
  $D_{2m}$ of rotations, so the field fixed by the rotations subgroup
  is on the one hand $\Q(\sqrt{p^\star})$ and on the other one an
  unramified quadratic extension of $\Q$, which is a contradiction.We have thus established that the projective image of $\bar{\rho}(I_p)$ has order at most $2$.

  Start considering the case of a ``niveau 1''
  character, i.e, the image of the inertia group $I_p$ is of the form
  $\left(\begin{smallmatrix} \chi_p^{k(\bar{\rho})-1} & 0\\
      0 & 1
    \end{smallmatrix}\right)$ with $k(\bar{\rho})\le p$ (recall that we have already shown that this image is  abelian). Observe that in this case the order of this group agrees with its projective order. Then we know that
  $\chi^{k(\bar{\rho})-1}$ has order at most $2$ and
  $k(\bar{\rho}) \le p$, so either $2k(\bar{\rho})-2 = p-1$ or
  $k(\bar{\rho})-1=p-1$. In the second case, we have $k(\bar{\rho})=p$, but then 
   the image of inertia is trivial, hence the
  Galois extension corresponding to $\bar{\rho}$ is disjoint from
  $\Q(\sqrt{p^\star})$. In particular, the representation is not bad
  dihedral.

  In the case of a character of niveau 2, the image of the inertia groups $I_p$ is of
  the form $\psi^{k(\bar{\rho})-1} \oplus \psi'^{k(\bar{\rho})-1}$,
  hence the projective order equals the order of $\psi^{(k(\bar{\rho})-1)(p-1)}$,
  which is the $k(\bar{\rho})-1$-st power of a character of order
  $p+1$. It has order at most $2$ when
  $\frac{p+1}{2} \mid k(\bar{\rho})-1$ and the condition
  $k(\bar{\rho}) \le p$ gives that $p = 2k(\bar{\rho})-3$.
\end{proof}

\begin{remark}
  Over $\Q$, a Galois representation with Serre's level $1$ cannot be
  bad dihedral in the niveau 2 case due to a Lemma of Wintenberger
  (see \cite[Lemma 6.2]{MR2254626} part (i)), hence the bad dihedral
  case can only occur for $p=2k(\bar{\rho})-1$ in level $1$. But in this case
  $k(\bar{\rho})$ is even, hence a bad dihedral representation of
  Serre's level $1$ can only occur when $p \equiv 3 \pmod 4$.
\label{rem:nobaddihedral}
\end{remark}

%

The following lemma will also be needed in the proof.

\begin{lemma}
  \label{lemma:dihedral}
  Let $p$ be a prime that is congruent to $1$ modulo $4$ and let
  $\overline{\rho}:\Gal_\Q \to \GL_2(\F_p)$ be an odd representation
  with non-solvable image. Denote by $\overline{\rho}_{\text{proj}}$
  the projectivization of $\overline{\rho}$ and by $c \in \Gal_\Q$ a
  complex conjugation. Then there exists a set of primes $\{q\}$ of
  positive density that are unramified for $\overline{\rho}$ and such
  that
  \begin{enumerate}
  \item $\overline{\rho}_{\text{proj}}(\Frob_q)$ and $\overline{\rho}_{\text{proj}}(c)$ define the same conjugacy class in $\overline{\rho}_{\text{proj}}(\Gal_\Q)$,
    
  \item $q$ is congruent to $1$ modulo all primes $\le p-1$ and $q$ is
    congruent to $1$ modulo $8$,
    
  \item $q$ is congruent to $-1$ modulo $p$.
  \end{enumerate}
\end{lemma}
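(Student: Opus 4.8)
The strategy is a Chebotarev-density argument applied to a suitable compositum of field extensions, where the three conditions are arranged to be simultaneously satisfiable by cutting out disjoint (or at least independent) pieces. First I would fix the following auxiliary fields: let $K/\Q$ be the finite Galois extension cut out by $\overline{\rho}_{\text{proj}}$, i.e. $\Gal(K/\Q) \simeq \overline{\rho}_{\text{proj}}(\Gal_\Q)$, which is a non-solvable subgroup of $\PGL_2(\F_p)$; let $F = \Q(\zeta_M)$ where $M = 8\prod_{\ell \le p-1}\ell$ is the modulus controlling condition (2); and note that condition (3), $q \equiv -1 \pmod p$, is a condition on $\Frob_q$ inside $\Gal(\Q(\zeta_p)/\Q) \simeq (\Z/p)^\times$. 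The key point is that $\Q(\zeta_p) \subseteq F$ already (since $p \le p-1$ is false, so one must instead observe $p \nmid M$ and handle $\Q(\zeta_p)$ as a separate cyclotomic factor, adjusting $F$ to $\Q(\zeta_{Mp})$), so conditions (2) and (3) together simply prescribe a particular element $g_0$ of $\Gal(\Q(\zeta_{Mp})/\Q) \simeq (\Z/Mp)^\times$ — namely $g_0 \equiv 1 \pmod M$ and $g_0 \equiv -1 \pmod p$, which exists by CRT since $\gcd(M,p)=1$.

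Next I would check that $K$ and $\Q(\zeta_{Mp})$ are linearly disjoint over $\Q$, or at least compute $\Gal(K\cdot\Q(\zeta_{Mp})/\Q)$. Here I would use that $\overline{\rho}$ has non-solvable image: the intersection $K \cap \Q(\zeta_{Mp})$ is an abelian (indeed cyclic) extension of $\Q$ contained in $K$, hence corresponds to a normal subgroup of $\Gal(K/\Q)$ with abelian quotient; since $\Gal(K/\Q)$ is non-solvable, in fact since its only abelian quotient is controlled by the image in $\PGL_2$, this intersection is very small — at worst $\Q(\sqrt{p^\star})$. But $p \equiv 1 \pmod 4$ forces $p^\star = p$, so $\Q(\sqrt{p^\star}) = \Q(\sqrt p) \subseteq \Q(\zeta_p)$, and one must just track this overlap carefully rather than claim full disjointness. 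The cleaner route is: by Chebotarev applied to the field $L = K \cdot \Q(\zeta_{Mp})$, the set of primes $q$ with $\Frob_q$ landing in a prescribed conjugacy class of $\Gal(L/\Q)$ has positive density, so it suffices to exhibit \emph{one} conjugacy class whose image in $\Gal(K/\Q)$ is the class of $\overline{\rho}_{\text{proj}}(c)$ and whose image in $\Gal(\Q(\zeta_{Mp})/\Q)$ is $g_0$. Such a class exists precisely when $\overline{\rho}_{\text{proj}}(c)$ and $g_0$ agree on the overlap $K \cap \Q(\zeta_{Mp})$.

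This compatibility on the overlap is the main obstacle, and it is exactly where the hypothesis $p \equiv 1 \pmod 4$ is used. If the overlap is trivial there is nothing to check. Otherwise the overlap is $\Q(\sqrt p)$ (the only candidate, by the non-solvability argument above combined with Lemma~\ref{lemma:equivalence}-type considerations on which quadratic fields sit inside $K$ — it must be unramified outside $p$, hence $\Q(\sqrt{p^\star}) = \Q(\sqrt p)$). On one side, $g_0 \equiv 1 \pmod M$ and $g_0 \equiv -1 \pmod p$: complex conjugation $c$ itself restricts to the nontrivial element of $\Gal(\Q(\zeta_{Mp})/\Q)$ corresponding to $-1$, and since $-1$ is a square mod $p$ (because $p \equiv 1 \pmod 4$), the element $g_0$ acts \emph{trivially} on $\Q(\sqrt p)$ — i.e. $g_0$ restricted to $\Q(\sqrt p)$ is trivial. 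On the other side, $\overline{\rho}_{\text{proj}}(c)$ restricted to $\Q(\sqrt p) = K \cap \Q(\zeta_{Mp})$: since $c$ is complex conjugation and $\Q(\sqrt p)$ is real, $c$ also acts trivially on $\Q(\sqrt p)$. Hence both sides restrict to the identity on the overlap, the required conjugacy class in $\Gal(L/\Q)$ exists, and Chebotarev gives a positive-density set of primes $q$ — automatically unramified for $\overline{\rho}$ since they split appropriately in $L \supseteq K$ — satisfying all three conditions. I would present the argument in this order: (i) set up $K$, $M$, $g_0$; (ii) identify the overlap $K \cap \Q(\zeta_{Mp})$ as $\Q$ or $\Q(\sqrt p)$ using non-solvability; (iii) verify the restrictions of $\overline{\rho}_{\text{proj}}(c)$ and $g_0$ to the overlap coincide, invoking $p \equiv 1 \pmod 4$; (iv) conclude by Chebotarev.
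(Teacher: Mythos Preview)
Your Chebotarev strategy is the standard one and is how the lemma is proved in \cite{KW} (the paper itself gives no argument, only the citation). But step (ii) has a real gap: you assert that $K\cap\Q(\zeta_{Mp})$ is either $\Q$ or $\Q(\sqrt p)$ because the overlap ``must be unramified outside $p$''. Nothing in the hypotheses forces this. When the projective image is $\PGL_2(\F_p)$, the quadratic subfield $K_2\subseteq K$ is cut out by the character $\sigma\mapsto\bigl(\frac{\det\overline{\rho}(\sigma)}{p}\bigr)$, which may ramify wherever $\overline{\rho}$ does; so $K_2$ need not be $\Q(\sqrt p)$ at all. Your compatibility check in (iii) then breaks down, since you only verify that $g_0$ and $c$ agree on $\Q(\sqrt p)$.

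The repair is immediate and uses exactly the observation you already isolated. On the cyclotomic side, $g_0=(1,-1)\in(\Z/M)^\times\times(\Z/p)^\times$ lies in \emph{every} index-$2$ subgroup of $(\Z/Mp)^\times$: it is trivial on the $M$-component, and on the $p$-component $-1$ is a square since $p\equiv 1\pmod 4$. Hence $g_0$ acts trivially on \emph{every} quadratic subfield of $\Q(\zeta_{Mp})$, not just on $\Q(\sqrt p)$. On the $K$ side, you do not need to identify the overlap: by Dickson, the non-solvable projective image is $\PSL_2(\F_p)$, $\PGL_2(\F_p)$, or $A_5$, and in each case the derived subgroup has index at most $2$, the quotient (when nontrivial) being detected by whether the determinant is a square in $\F_p^\times$. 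Since $\det\overline{\rho}(c)=-1$ is a square, $\overline{\rho}_{\text{proj}}(c)$ lies in the derived subgroup of $\Gal(K/\Q)$ and hence acts trivially on every abelian subextension of $K$, in particular on $K\cap\Q(\zeta_{Mp})$. With both restrictions trivial on the overlap (whatever it is), the required conjugacy class in $\Gal(K\cdot\Q(\zeta_{Mp})/\Q)$ exists and your Chebotarev conclusion goes through.
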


\begin{proof}
  See \cite[Lemma 8.2]{KW}.
\end{proof}

\section{The proof of Serre's modularity conjecture for $p \neq 2$.}
\label{section:proofoddcase}
We can assume without loss of generality that ${\rho}$ has
non-solvable image, as otherwise modularity follows from
Theorem~\ref{thm:solvable}.  Let $\rho^{(0)}$ be a minimal crystalline
lift of $\rho$, which exists by Theorem~\ref{thm:lifting} (3) and let
$\{\rho^{(0)}_\ell\}$ be an almost strictly compatible system passing
through $\rho^{(0)}$ (Theorem~\ref{thm:compatible}). Let $k = k(\rho)$
be the weight of the system (recall that taking a suitable twist we
always assume that $k \le p+1$ as explained in
Remark~\ref{remark:weightk+1}), i.e., the Hodge-Tate weights of the
representation $\rho^{(0)}_\ell$ are $\{0 , k-1 \}$. Note that by
Remark~\ref{remark:modularityfamily} it is enough to prove that any
member of the family is modular.

\vspace{2pt}
\noindent {\bf Paso 1:} change to a weight two system. Let $w$ be a
sufficiently large prime, so that it does not belong to the
ramification set of the compatible system and such that $w > 2k$. Then
$\rho^{(0)}_w$ is in what is called the \emph{Fontaine-Laffaille} case
(i.e. the representation is crystalline and the weight is smaller than
the residual characteristic), thus Serre's weight of
$\overline{\rho^{(0)}_w}$ coincides with the weight $k$ of the
compatible system. Lemma~\ref{lemma:nobaddihedral} implies that
$\rho^{(0)}_w$ is not residually bad dihedral. Consider the reduction
$\overline{\rho^{(0)}_w}$ of $\rho^{(0)}_w$. If its image happens to be solvable,
we claim that $\rho^{(0)}_w$ is modular, and then so is
$\rho^{(0)}_\ell$ (by Remark~\ref{remark:modularityfamily}). The
reason is that either the residual image of $\rho^{(0)}_w$ is
reducible (in which case modularity follows from
Theorem~\ref{thm:modularityreduciblecase}) or otherwise, it is
irreducible but solvable, in which case the residual representation is
modular by Theorem~\ref{thm:solvable} and $\rho^{(0)}_w$ is modular by
Theorem \ref{thm:R=T}. In this case the proof ends here (this argument
is crucial and will be used later on).

Suppose on the contrary, that the residual representation has
non-solvable image. Take a weight $2$ lift $\rho^{(1)}_w$ of
$\overline{\rho^{(0)}_w}$, with ramification set equal to the one of $\overline{\rho^{(0)}_w}$
(whose existence is guaranteed by Theorem~\ref{thm:lifting} (4)). This representation in general will not be crystalline at $w$. Now
we can focus on proving modularity of the representation
$\rho^{(1)}_w$, since Theorem \ref{thm:R=T} implies that
$\rho^{(0)}_w$ is modular if and only if $\rho^{(1)}_w$ is. This is
the idea of \emph{propagating} modularity, one can move via a suitable
congruence from one representation to another one until we hit a
modular representation.

Using Theorem~\ref{thm:compatible}, make $\rho^{(1)}_w$ part of an
almost strictly compatible system of Galois representations
$\{\rho^{(1)}_\ell\}$ and let $p_1,\ldots,p_r$ be the primes in the
ramification set of the system (set that we denote by $S_1$). 

\vspace{2pt}
\noindent {\bf Paso 2:} add what is called a ``good-dihedral
prime''. This implies adding an extra prime $N$ to the ramification
set, such that the local type of the resulting system at $N$ is dihedral, i.e. the induction of a
character from a quadratic extension.

Let $K$ be the coefficient field of the compatible system, and let
$q \equiv 1 \pmod 4$ be a prime number greater than the primes in
$S_1$ and also greater than $5$ which splits in $K$. By
Lemma~\ref{lemma:nobaddihedral}, the residual representation
$\overline{\rho^{(1)}_q}$ is not bad dihedral (we are using again the fact that we are in the Fontaine-Laffaille situation, now with a system of weight $k =2$).  If the residual image
of $\rho^{(1)}_q$ happens to be solvable, then once again
$\rho^{(1)}_q$ is modular (because of Theorem~\ref{thm:solvable} and
\ref{thm:R=T} in the irreducible case and
Theorem~\ref{thm:modularityreduciblecase} in the reducible case). Then
we can assume that the image of the residual representation
$\overline{\rho^{(1)}_q}$ is non-solvable.

Since a continuous representation always fixes a lattice, after a
possible conjugation, we can assume that the representation
$\rho^{(1)}_q$ takes values in $\GL_2(\Om_K)$ (the ring of integers of
$K$), so its residual representation has image lying in $\GL_2(\F_q)$
(recall that $q$ splits in $K$). Let $N$ be a prime number which does
not belong to the ramification set $S_1$ as in
Lemma~\ref{lemma:dihedral}. The first hypothesis implies that
$\trace(\overline{\rho^{(1)}_q}(\Frob_N)) = 0$, and the last one
implies that $\chi_q(\Frob_N) + 1$ is also congruent to zero, so the
restriction of the residual representation $\overline{\rho^{(1)}_q}$
to the decomposition group at $N$ (up to a twist) is of the form
\[
\overline{\rho^{(1)}_q}|_{D_N} \simeq \begin{pmatrix} \overline{\chi_q} & 0\\ 0 & 1\end{pmatrix}.
\]
Let $\Q_{N^2}$ denote the quadratic unramified extension of $\Q_N$, so
that its residue field has order $(N-1)(N+1)$. Since $q\mid (N+1)$,
there exists (by class field theory) a character
$\varkappa : \Gal_{\Q_ {N^2}} \to \overline{\Z_q}^\times$ of order $q$
(whose restriction to the inertia group at $q$ is precisely a
character of \emph{niveau} 2). Note that the composition of
$\varkappa$ with the residue map is trivial (as
$\overline{\F_q}^\times$ does not have elements of order $q$), so the
induction $\Ind_{\Gal_{\Q_{N^2}}}^{\Gal_{\Q_N}}\varkappa$ is a
dihedral $2$-dimensional representation of $\Gal_{\Q_N}$, whose
residual representation is unramified, and has zero trace at a
Frobenius element. In particular, we can take
$\Ind_{\Gal_{\Q_{N^2}}}^{\Gal_{\Q_N}}\varkappa$ as our local type
condition at the prime $N$.

Take a crystalline lift $\rho^{(2)}_q$ of weight $2$ with such a
dihedral image of order $2q$ at the prime $N$ (whose existence is
guaranteed by Theorem~\ref{thm:lifting} (4)). Theorem \ref{thm:R=T}
implies that the representation $\rho^{(1)}_q$ is modular if and only
if the representation $\rho^{(2)}_q$ is modular. Make $\rho^{(2)}_q$
part of an almost strictly compatible system $\{\rho^{(2)}_\ell\}$,
whose ramification set equals $S=S_1 \cup \{N\}$.

\begin{lemma}
  Let $\{\rho_\ell\}$ be an almost strictly compatible system of
  Galois representations, whose ramification set is contained in
  $S\cup \{2,3\}$, and such that the local inertial type at the
  prime $N$ matches the type of
  $\Ind_{\Gal_{\Q_{N^2}}}^{\Gal_{\Q_N}}\varkappa$ defined before. Then
  for any prime $p \in S_1 \cup\{2,3\}$ the residual representation
  $\overline{\rho_p}$ has non-solvable image.
  \label{lemma:largeimage}
\end{lemma}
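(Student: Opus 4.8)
The plan is to pin down the restriction of $\overline{\rho_p}$ to the decomposition and inertia groups at $N$, deduce that $\overline{\rho_p}$ is absolutely irreducible with an element of order $q$ in its image, apply Dickson's classification, and then exploit the congruences built into the good‑dihedral prime $N$ to exclude the only surviving solvable possibility. First I would record the data: $q$ is a prime, $q\geq 5$, exceeding all primes of $S_1$, $\varkappa$ has order $q$, and $N\equiv-1\pmod q$, $N\equiv1\pmod 8$ and $N\equiv1\pmod\ell$ for every prime $\ell\leq q-1$ (Lemma~\ref{lemma:dihedral}); in particular $p\notin\{q,N\}$ and $N\equiv1\pmod\ell$ for all $\ell\in S_1\cup\{2,3\}$. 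Fix $p\in S_1\cup\{2,3\}$. Since $p\neq N$, the hypothesis on the inertial type at $N$ gives $\rho_p|_{I_N}\cong\varkappa|_{I_N}\oplus\varkappa^\sigma|_{I_N}$, with $\sigma$ a generator of $\Gal(\Q_{N^2}/\Q_N)$; as $p\neq q$, reduction modulo $p$ is injective on $q$-th roots of unity, so $\overline{\rho_p}|_{I_N}\cong\overline{\varkappa}|_{I_N}\oplus\overline{\varkappa}^\sigma|_{I_N}$ with $\overline{\varkappa}|_{I_N}$ of order exactly $q$. Thus, for a generator $\tau_N$ of the tame inertia at $N$, $\overline{\rho_p}(\tau_N)$ is conjugate to $\left(\begin{smallmatrix}\zeta&0\\0&\zeta^{-1}\end{smallmatrix}\right)$ for a primitive $q$-th root of unity $\zeta$ (using that $\overline{\varkappa}^\sigma=\overline{\varkappa}^{\,N}$ on $I_N$ and $N\equiv-1\pmod q$), so $\overline{\rho_p}(I_N)$ is cyclic of order $q$; and since an order-$q$ character with $q>2$ is not its own inverse, $\overline{\varkappa}|_{I_N}\neq\overline{\varkappa}^\sigma|_{I_N}$, whence $\overline{\rho_p}|_{D_N}\cong\Ind_{\Gal_{\Q_{N^2}}}^{\Gal_{\Q_N}}\overline{\varkappa}$ is irreducible and $\overline{\rho_p}$ is absolutely irreducible.

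Next I would assume, for contradiction, that $\overline{\rho_p}$ has solvable image, and let $\overline{G}$ be its projective image. By Dickson's classification an irreducible solvable subgroup of $\PGL_2(\overline{\F_p})$ is cyclic, dihedral, $A_4$ or $S_4$; the cyclic case cannot occur, as it would force the image of $\overline{\rho_p}$ to be abelian and hence reducible. The matrix $\left(\begin{smallmatrix}\zeta&0\\0&\zeta^{-1}\end{smallmatrix}\right)$ is non-scalar and generates a cyclic subgroup of order $q$ meeting the scalars trivially, so $\overline{G}$ contains an element of order $q\geq5$; since $q\nmid 12=|A_4|$ and $q\nmid 24=|S_4|$, the group $\overline{G}$ is dihedral, $\overline{G}=D_{2m}$, and this element lies in the rotation subgroup $C_m$, so $q\mid m$.

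It remains — and this is the heart of the matter — to rule out the dihedral case. Since $q\mid m$, $C_m$ is the unique index-$2$ cyclic subgroup of $D_{2m}$; letting $L$ be the finite extension of $\Q$ cut out by the projectivization of $\overline{\rho_p}$ and $F=L^{C_m}$, the field $F$ is quadratic and $\overline{\rho_p}\cong\Ind_{\Gal_F}^{\Gal_\Q}\psi$ for a character $\psi$ of $\Gal_F$. The projective image of $I_N$ is cyclic of order $q$, hence lies in $C_m$ (the reflections have order $2<q$), so $N$ is unramified in $F$; on the other hand $\Frob_N$ conjugates the projective image of $\tau_N$ to $\tau_N^N$, that is to its inverse since $N\equiv-1\pmod q$, and a non-trivial element $\bar g\in C_m$ of order $q>2$ is inverted only by the reflections of $D_{2m}$, so $\Frob_N$ maps to the non-trivial element of $\Gal(F/\Q)$; thus $N$ is inert in $F$. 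But $F\subseteq L$ is unramified outside the primes at which $\overline{\rho_p}$ ramifies, and — $N$ being unramified in $F$ — these all lie in $S_1\cup\{2,3\}$, hence are $\leq q-1$; writing $F=\Q(\sqrt{d})$ with $d$ squarefree and supported on primes $\leq q-1$, quadratic reciprocity together with $N\equiv1\pmod 8$ and $N\equiv1\pmod\ell$ for every prime $\ell\leq q-1$ (note $N\equiv1\pmod 4$) gives $\kro{d}{N}=1$, i.e. $N$ splits in $F$, contradicting that $N$ is inert. Hence $\overline{\rho_p}$ is non-solvable; the case $p=2$ is identical. The delicate point throughout is this last step: it is exactly the good-dihedral mechanism, namely that the congruences on $N$ — chosen with $q$ larger than every small ramified prime — make $N$ split in every quadratic field ramified only at such small primes, while the rigid order-$q$ tame ramification of $\overline{\rho_p}$ at $N$ forces $N$ to be inert in the quadratic field attached to a dihedral image; one must be attentive that $N$, the one potentially large ramified prime, is unramified in that quadratic field, so that the splitting condition really applies.
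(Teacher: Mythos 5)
Your proof is correct and follows essentially the same route as the paper: use the order-$q$ inertial type at $N$ to get an absolutely irreducible residual restriction to $D_N$ and an order-$q$ element in the projective image, invoke Dickson to reduce the solvable case to the dihedral case, and then use the congruences built into $N$ to derive a contradiction. The only cosmetic difference is in how the dihedral case is killed: the paper observes that the congruences force $N$ to split or ramify in the quadratic field $K$ and then rules out each (split contradicts irreducibility of $\overline{\rho_p}|_{D_N}$; ramified contradicts the odd-order inertia), whereas you show directly that $N$ is unramified and inert in $F$ via the Frobenius-conjugation action on tame inertia, and then derive the same contradiction with the split condition coming from quadratic reciprocity. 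Both are the same argument organized slightly differently, and your version has the merit of making the inertness explicit.
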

\begin{proof}
  This result is taken from \cite[Lemma 6.3]{KW}. Let
  $p \in S_1\cup\{2,3\}$ be a prime, and let $\rho_{p}$ be the
  $p$-th representation of the family. The strong compatibility
  condition implies that its restriction to the decomposition group
  $D_N$ is given as the induction of a ramified character of order $q$
  from the quadratic extension $\Q_{N^2}$. Since the prime $q$ is
  larger than $p$ (by construction), the kernel of the reduction map
  $\overline{\Z_{p}} \to \overline{\FF_{p}}$ does not have elements of
  order $q$, hence the restriction of the residual representation
  $\overline{\rho_p}$ to the decomposition group $D_N$ is irreducible
  (in particular, the residual representation is irreducible).

  If the representation has solvable image, by Dickson's
  classification (and the fact that its projective image has order
  divisible by $q > 5$), its projective image must be a dihedral
  group. Suppose this is the case, so the projectivization of our
  residual representation is induced from a quadratic extension $K$ of
  $\Q$. Note that $K$ can only ramify at primes where our family does,
  namely some primes of $S \cup \{2,3\}$ (recall that
  $p \in S \cup\{2,3\}$ so the ramification at $p$ is also
  considered). Since $N \equiv 1 \pmod{p}$ for all odd primes
  $p \in S_1 \cup\{3\}$ (by Lemma~\ref{lemma:dihedral} and the fact
  that $p < q$), and $N \equiv 1 \pmod 8$, either $N$ splits in $K$ or
  it is ramified. If $N$ splits in $K/\Q$, then the restriction of
  $\overline{\rho_p}$ to the decomposition group $D_N$ would be
  reducible (since $\overline{\rho_p}$ restricted to $\Gal_K$ is
  reducible), contradicting what we proved in the first
  paragraph. Otherwise, if $N$ ramifies in $K/\Q$, the inertia group
  at the prime $N$ has even order contradicting the fact that our
  local type at $N$ is the induction from a quadratic unramified
  extension of an odd order character.
\end{proof}

\begin{defi}
  A continuous representation
  $\rho:\Gal_\Q \to \GL_2(\overline{\Q_p})$ has \emph{large image} if
  its residual representation has non-solvable image.
\end{defi}

All the congruences in Paso 3 and Paso 4 are modulo primes in
$S_1 \cup \{2,3\}$, and the ramification sets of the involved representations are contained in $S\cup \{2,3\}$ so the hypothesis of Lemma~\ref{lemma:largeimage}
will always be satisfied. In particular, the good-dihedral prime $N$
 provides the large image hypothesis needed to apply the results of the first section.

\vspace{2pt}
\noindent {\bf Paso 3:} killing the odd part of the level. Let
$\{p_1, \ldots, p_r,N\}$ denote the set of primes where the system is
ramified, $p_1$ being the smallest one (probably equal to $2$).  For each
$i = 2,\ldots ,r$, apply the following procedure:
\begin{itemize}
\item consider the reduction $\overline{\rho^{(i)}_{p_i}}$ of the $p_i$-th entry
  of the compatible system $\{\rho^{(i)}_\ell\}$,
  
\item Take $\rho^{(i+1)}_{p_i}$ to be a minimal lift of
  $\overline{\rho^{(i)}_{p_i}}$ unramified at $p_i$ (which exists by
  Theorem~\ref{thm:lifting} (3), noting that the hypothesis on the
  restriction of the residual representation to
  $\Gal_{\Q(\zeta_{p_i})}$ being absolutely irreducible is fulfilled
  by Lemma~\ref{lemma:largeimage}),
  
\item Make $\rho^{(i+1)}_{p_i}$ part of an almost strictly compatible
  system $\{\rho^{(i+1)}_\ell\}$ using
  Theorem~\ref{thm:compatible}. Now the ramification set of the new
  family does not contain the prime $p_i$ (but is still contained in
  $S_1 \cup \{2,3\}$ so Lemma~\ref{lemma:largeimage} applies to the
  new family).
\end{itemize}
By Theorem~\ref{thm:R=T} the representation $\rho^{(i)}_{p_i}$ is
modular if and only if the representation $\rho^{(i+1)}_{p_i}$ is
modular. In particular, modularity of the compatible system $\{\rho^{(i)}_\ell\}$
is equivalent to modularity of the compatible system $\{\rho^{(i+1)}_\ell\}$.

If $p_1$ is odd, we apply the same procedure at $p_1$, and end with a
system unramified outside $\{N\}$. In such a case, go to Paso 5.

\vspace{2pt}
\noindent {\bf Paso 4:} removing $2$ from the level. This step follows
the method of \cite{KW}.  The following lemma will prove
crucial. Recall that a local type at $p$ is \emph{Steinberg} if its inertial
Weil-Deligne parameter is given by
$(\omega_1  \oplus 1,\left(\begin{smallmatrix}0 & 1\\ 0 &
    0\end{smallmatrix}\right))$.

\begin{lemma}
  Let $\{\rho_\ell\}$ be an almost strictly compatible system with
  Hodge-Tate weights $\{0,1\}$ which is unramified at $3$. If the
  Weil-Deligne representation at the prime $2$ is (a twist of)
  Steinberg and $\rho_3$ has non-solvable residual
  image then there exists another almost strictly compatible
  system $\{\rho'_\ell\}$ with Hodge-Tate weights $\{0,1\}$,
  having the same ramification set as  $\{\rho_\ell\}$ such that the $3$-adic members of the two systems are congruent,
  and whose Weil-Deligne type at the prime $2$ is given by an order
  $3$ character (in particular has trivial monodromy). Furthermore,
  the modularity of one of the systems is equivalent to modularity of
  the other one.
\label{lemma:technical}
\end{lemma}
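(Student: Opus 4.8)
The strategy is exactly the one from \cite{KW}: we replace the Steinberg condition at $2$ by a tamely ramified, order $3$ inertial type at $2$, using a ``switch'' through the prime $3$. Starting from the given system $\{\rho_\ell\}$, consider the reduction $\overline{\rho_3}$, which by hypothesis has non-solvable image; in particular its restriction to $\Gal_{\Q(\zeta_3)}$ is absolutely irreducible, so Theorem~\ref{thm:lifting}~(4) applies to $\overline{\rho_3}$. The plan is to produce a lift $\rho'_3$ of $\overline{\rho_3}$ with Hodge--Tate weights $\{0,1\}$, crystalline at $3$ (since $k(\overline{\rho_3})$ can be taken to be $2$ after twisting by Remark~\ref{remark:weightk+1}, or handled via the Steinberg option at $3$ if forced to be $4$), with the same inertial types as $\rho_3$ at every prime of the ramification set of $\{\rho_\ell\}$ \emph{except} at $2$, and with a prescribed tamely ramified inertial type of order $3$ at $2$. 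The existence of such a tame order $3$ type compatible with $\overline{\rho_3}|_{I_2}$ needs to be checked: since $\overline{\rho_3}$ has residual characteristic $3$ and the Steinberg condition forces $\overline{\rho_3}|_{I_2}$ to be unipotent, one chooses a character of the tame quotient of $I_2$ of order $3$ (available because $3 \mid (2^2-1)=(2-1)(2+1)$, so $\Q_{4}/\Q_2$ carries such characters by class field theory) whose reduction is trivial — and then the type is $\overline{\rho_3}$-compatible since $\overline{\rho_3}|_{I_2}$ is trivial or unipotent.

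Having obtained $\rho'_3$, the next step is to invoke Theorem~\ref{thm:compatible} to embed $\rho'_3$ into an almost strictly compatible system $\{\rho'_\ell\}$ with Hodge--Tate weights $\{0,1\}$. One then checks that the ramification set of $\{\rho'_\ell\}$ equals that of $\{\rho_\ell\}$: no new prime is introduced because $\overline{\rho_3}$ is ramified exactly where $\{\rho_\ell\}$ is (apart from possibly $3$ itself, which is in the set only if $\{\rho_\ell\}$ was ramified there — but it is not, by hypothesis, and our lift is crystalline at $3$); the prime $2$ stays in the set but with a different, now tame of order $3$, type. The Weil--Deligne parameter at $2$ of $\{\rho'_\ell\}$ is by construction the order $3$ character, which has trivial monodromy $N=0$. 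The remaining assertion, that modularity of $\{\rho_\ell\}$ is equivalent to modularity of $\{\rho'_\ell\}$, follows from a modularity lifting argument at $p=3$: the two systems share the residual representation $\overline{\rho_3}$ of non-solvable image, and both $\rho_3$ and $\rho'_3$ are de Rham at $3$ with Hodge--Tate weights $\{0,1\}$, so Theorem~\ref{thm:R=T} (applicable because non-solvable image implies, via Lemma~\ref{lemma:equivalence}, that $\overline{\rho_3}|_{\Gal_{\Q(\sqrt{-3})}}$ is absolutely irreducible) shows that $\rho_3$ is modular iff $\overline{\rho_3}$ is modular iff $\rho'_3$ is modular; combining with Remark~\ref{remark:modularityfamily} gives the equivalence for the full systems.

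The main obstacle I anticipate is the bookkeeping around the prime $3$ itself: one must ensure the chosen lift $\rho'_3$ is \emph{crystalline} at $3$ (so that $3$ does not enter the ramification set) while simultaneously controlling the Serre weight. If the Serre weight $k(\overline{\rho_3})$ is not $2$ after twisting, one cannot take the lift crystalline with Hodge--Tate weights $\{0,1\}$ directly; the fix (following \cite{KW}) is that since $\overline{\rho_3}$ comes from reduction of a weight $2$ system it is automatically of Serre weight $2$ up to twist in the relevant Fontaine--Laffaille-type range at $3$ — but this requires care because $3$ is small. A secondary subtlety is verifying that the order $3$ type at $2$ is genuinely compatible with $\overline{\rho_3}|_{I_2}$ in the sense of admitting a stable lattice reducing correctly; this is where the congruence $3 \mid 2+1$ is used, and it is the precise analogue of the good-dihedral construction but with the roles of the ``big'' and ``small'' primes swapped. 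Once these two points are settled, the rest is a routine application of Theorems~\ref{thm:lifting},~\ref{thm:compatible},~\ref{thm:R=T} and Remark~\ref{remark:modularityfamily}.
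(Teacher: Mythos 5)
Your proposal follows essentially the same route as the paper: exploit $3 \mid (2^2-1)$ to build a tame order-$3$ inertial type at $2$ from a \emph{niveau} $2$ character of $\Q_4/\Q_2$, feed it into Theorem~\ref{thm:lifting}~(4) to get a crystalline weight-$2$ lift $\rho'_3$ of $\overline{\rho_3}$ with that type at $2$ and minimal elsewhere, promote to an almost strictly compatible system via Theorem~\ref{thm:compatible}, and close with Theorem~\ref{thm:R=T} and Remark~\ref{remark:modularityfamily}. Two small remarks: the worry about $k(\overline{\rho_3})$ failing to be $2$ is vacuous — the hypotheses (unramified at $3$, Hodge--Tate weights $\{0,1\}$) already put $\rho_3$ in the Fontaine--Laffaille range at $p=3$, forcing $k(\overline{\rho_3})=2$ after twist, and in any case falling back to a Steinberg-at-$3$ lift would introduce $3$ into the ramification set and break the conclusion, so that ``fix'' is not available; and the compatibility of the new type with $\overline{\rho_3}|_{I_2}$ is more accurately checked, as in the paper, by matching the residual inertial restriction (both are unipotent mod $3$, as $\chi'$ reduces trivially) and noting the residual Frobenius trace vanishes on both sides.
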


\begin{proof}
%
%
  Under our hypothesis, $\rho_3|_{D_2} \simeq \left(\begin{smallmatrix} \chi_3 & * \\
      0 & 1\end{smallmatrix}\right)$ (up to twist). We follow the same
  strategy as we did to incorporate the good-dihedral prime to our
  family. We take a \emph{niveau} 2 character $\chi'$ of order $3$
  (corresponding to the quadratic unramified extension $\Q_4$ of
  $\Q_2$) and consider its induction to $\Gal_{\Q_2}$. This
  corresponds to a local Galois representation $\tilde{\rho}_2$ of
  $\Gal_{\Q_2}$ whose restriction to the inertia group $I_2$ is of the
  form
  $\left(\begin{smallmatrix} \chi' & * \\ 0 &
      \chi'^2\end{smallmatrix}\right)$ (so as a Weil-Deligne
  representation, monodromy is trivial). Note that both
  representations ($\rho_3|_{D_2}$ and $\tilde{\rho}_2$) have the same
  reduction while restricted to the inertia subgroup $I_2$ and the
  traces of both reductions at a Frobenius element are zero.

  Let $\rho'_3$ be a
  crystalline weight $2$ minimal lift of $\overline{\rho_3}$ whose
  local type at $2$ matches that of $\tilde{\rho}_2$ (such a lift
  which exists by Theorem~\ref{thm:lifting} (4)). Let $\{\rho'_\ell\}$
  be an almost strictly compatible system containing $\rho'_3$ (whose
  existence is warranted by Theorem~\ref{thm:compatible}). Modularity
  of the system $\{\rho'_\ell\}$ is equivalent to that of
  $\{\rho_\ell\}$ by Theorem~\ref{thm:R=T}.
\end{proof}

\begin{remark} The previous lemma is needed because existence of a
  crystalline lift of a Galois representation into
  $\GL_2(\overline{\F_2})$ is only proved when Serre's weight is $2$
  (see the hypothesis of Theorem~\ref{thm:lifting} (1) and (2)).  As
  explained in the proof of Theorem 9.1 of \cite{KW}, if we start with
  an almost strictly compatible system of Galois representations, such
  that the residual representation at the prime $2$ has non-solvable
  image and has Serre's weight $k(\overline{\rho_2})=4$, then the new
  family provided by Lemma~\ref{lemma:technical} satisfies that
  $k(\overline{\rho'_2}) = 2$. The reason is that the weight $4$ case
  corresponds to a tr\`es ramifi\'ee situation; in particular the
  residual representation is flat over an extension with even
  ramification index. On the other hand, the ramification type at $2$
  of the representation produced by the lemma was chosen so that it
  becomes flat over a cubic extension, leading to a contradiction.
  \label{remark:weight2}
\end{remark}

Recall from Lemma~\ref{lemma:largeimage} that our good-dihedral
prime at $N$ ensures non-solvable residual image at the prime $2$. The
procedure to remove the prime $2$ from the compatible system's level
consists of applying the following steps:

\begin{enumerate}
\item If $k\left(\overline{\rho^{(r+1)}_2}\right) = 4$, take a minimal weight
  $2$ lift $\rho^{(r+2)}_2$ with Steinberg type at $2$ (which exists
  by Theorem~\ref{thm:lifting} (2)) and make it part of an almost
  strictly compatible system $\{\rho^{(r+2)}_\ell\}$. Modularity of
  one compatible system is equivalent to modularity of the other by
  Theorem~\ref{thm:R=Tat2}. Otherwise,
  $k\left(\overline{\rho^{(r+1)}_2}\right) = 2$ in which case go to
  step ($3$) (to unify notation, denote $r+3$ the index).
  
\item Now the compatible system $\{\rho^{(r+2)}_\ell\}$ has Steinberg
  Weil-Deligne type at two. Change its local type at $2$ via a
  congruence at the prime $3$ as explained in
  Lemma~\ref{lemma:technical}. Such a lemma gives an almost strictly compatible
  system $\{\rho^{(r+3)}_\ell\}$ whose local type at $2$ comes from an
  order $3$ character. The same lemma proves that modularity of the
  compatible system $\{\rho^{(r+2)}_\ell\}$ is equivalent to
  modularity of the compatible system $\{\rho^{(r+3)}_\ell\}$.

\item At this step, in both cases
  $k\left(\overline{\rho^{(r+3)}_2}\right) = 2$ by
  Remark~\ref{remark:weight2}. Let $\{\rho^{(r+4)}_\ell\}$ be an
  almost strictly compatible system containing a minimal crystalline
  weight $2$ lift of $\overline{\rho^{(r+3)}_2}$ 
  (whose existence is warranted by Theorem~\ref{thm:lifting}
  (1)). Modularity of one family is equivalent to modularity of the
  other by Theorem~\ref{thm:R=Tat2}.
\end{enumerate}

Then we end with a system $\{\rho'_\ell\}$ unramified outside
$\{N\}$.

\vspace{2pt}
\noindent{\bf Paso 5:} killing the good-dihedral prime.  The reduction
modulo $N$ of the system $\{\rho'_\ell\}$ lies in one of the following cases:
\begin{enumerate}
\item The representation $\overline{\rho'_N}$ is reducible, in which
  case $\rho'_N$ is modular by
  Theorem~\ref{thm:modularityreduciblecase}. End of the
  proof. Incidentally, as the reader may easily check, this is the
  only step in the whole proof where the fact that the compatible
  systems that we are considering are only known to be {\it almost}
  strictly compatible is relevant. Since the local parameter at $N$ of
  this compatible system is ramified and we are assuming that
  $\overline{\rho'_N}$ is reducible, the conditions in the definition
  of ``almost compatible systems'' imply that we do not know the local
  behavior at $N$ of $\rho'_N$, we only know that it is a de Rham
  representation. Luckily, this is enough for
  Theorem~\ref{thm:modularityreduciblecase} to hold.
\item The representation is irreducible and not bad-dihedral. Take an
  almost strictly compatible system $\{\rho''_\ell\}$ containing a
  minimal crystalline lift of it (such a lift exists by
  Theorem~\ref{thm:lifting} (3)). Observe that the ramification set
  $S$ of this system is the empty set. Modularity of the system
  $\{\rho''_\ell\}$ is equivalent to that of $\{\rho'_\ell\}$ by
  Theorem~\ref{thm:R=T}.
  
\item The representation is bad-dihedral. Serre's level of the
  representation $\overline{\rho'_N}$ equals $1$ (as the system was
  unramified outside $N$), then this case cannot occur by
  Lemma~\ref{lemma:nobaddihedral} and Remark~\ref{rem:nobaddihedral},
  because $N \equiv 1 \pmod 4$.
\end{enumerate}


\vspace{2pt}

\noindent{\bf Paso 6:} reduction of the weight. Move to the prime
$p=5$, where the same three possibilities can occur:
\begin{enumerate}
\item The representation $\overline{\rho''_5}$ is reducible, in which
  case $\rho''_5$ is modular by
  Theorem~\ref{thm:modularityreduciblecase}. End of the proof.
  
\item The representation $\overline{\rho''_5}$ has absolutely
  irreducible image, but it is
  bad-dihedral. Since our family has 
  level $1$, we know by Remark~\ref{rem:nobaddihedral} that this is not possible, since $5 \not \equiv 3 \pmod 4$.

\item The representation $\overline{\rho''_5}$ is irreducible and not
  bad-dihedral. If Serre's weight $k(\overline{\rho''_5})=2$ or $4$,
  take an almost strictly compatible system $\{\tilde{\rho}_\ell\}$,
  whose ramification set is empty, containing a minimal crystalline
  lift of $\overline{\rho''_5}$, which exists by
  Theorem~\ref{thm:lifting} (3) and has Hodge-Tate weights
  $(0,k(\overline{\rho''_5})-1)$. If Serre's weight is $6$, take a
  weight $2$ lift, with Steinberg Weil-Deligne type at $p=5$,
  unramified outside $5$, which exists by Theorem~\ref{thm:lifting}
  (4) and make it part of an almost strictly compatible system
  $\{\tilde{\rho}_\ell\}$. In both cases modularity of the system
  $\{\rho''_\ell\}$ follows from that of the system
  $\{\tilde{\rho}_\ell\}$ by Theorem~\ref{thm:R=T}.

\end{enumerate}

In the cases of weight $2$ and $4$, we look at the $3$-adic
representation $\tilde{\rho}_3$ and we reduce modulo
$3$. Theorem~\ref{thm:basecases} implies that there are no irreducible
residual Galois representations unramified outside $3$, so the
representation $\overline{\tilde{\rho}_3}$ is reducible. Furthermore,
$\tilde{\rho}_3$ is ordinary by \cite[Corollary 4,3]{MR2060368},
because the weight is either $2$ or $4 = 3+1$, so the third hypothesis
of Theorem~\ref{thm:SkinnerWiles} is satisfied. Since Serre's weight
is not $3$, the image of $\overline{\tilde{\rho}}_3|_{D_3}$ is
non-trivial, hence Theorem~\ref{thm:SkinnerWiles} implies that
$\tilde{\rho_3}$ is modular.


The last case to consider is when $\tilde{\rho}_5$ has weight $2$, is
Steinberg at $5$ and unramified at any other prime. In this case, by
\cite[Proposition 9.4.1]{Snowden} the representation $\tilde{\rho_5}$
matches the $5$-adic representation of an abelian variety $A/\Q$ of
$\GL_2$-type. Since $\tilde{\rho}_5$ is Steinberg at $5$ and
unramified at any other prime, the variety $A$ is semistable and has
good reduction outside $5$ contradicting Theorem~\ref{thm:Schoof}.

\vspace{2pt}

We have seen that by a combination of suitable modularity lifting
theorems the modularity of the given representation was deduced by
propagation from the base cases proved by Serre and Schoof and cases
where the residual image is solvable. This concludes the proof of
Serre's conjecture in the case of odd characteristic.

\section{The case $p=2$}
The projective image is either solvable or non-solvable.
Recall that all solvable subgroups of $\PGL_2(\overline{\F_2})$ in the
irreducible case have dihedral projective image (the $S_4$ case does
not occur, while the $A_4$ case corresponds to a Borel subgroup over
$\F_4$ see \cite[Lemma 6.1]{KW}). The dihedral case was proven in \cite{MR1610808}. If the
image is non-solvable, take an odd weight two lift (guaranteed by
Theorem \ref{thm:lifting}(1) and (2)), make it part of an almost strictly compatible system
and reduce modulo a prime greater than $3$ and which is not in the ramification set of the
compatible system  (since the residual Serre weight is $2$ under these conditions, applying Lemma  ~\ref{lemma:nobaddihedral} we see that $p > 2 \cdot 2 -1 = 3 $ is enough to avoid the bad-dihedral situation)  to end in a situation covered before, namely: the
representation is either reducible (so modularity follows from
Theorem~\ref{thm:modularityreduciblecase}) or irreducible in odd
characteristic hence it is covered by the cases of Serre's conjecture we have already solved, in which case  modularity of the system follows from Theorem~\ref{thm:R=T} since we know that the residual representation is not bad-dihedral.

\bibliographystyle{alpha}
\bibliography{biblio}

\begin{thebibliography}{Tun21b}

\bibitem[BLR91]{MR1094193}
Nigel Boston, Hendrik~W. Lenstra, Jr., and Kenneth~A. Ribet.
\newblock Quotients of group rings arising from two-dimensional
  representations.
\newblock {\em C. R. Acad. Sci. Paris S\'{e}r. I Math.}, 312(4):323--328, 1991.

\bibitem[BLZ04]{MR2060368}
Laurent Berger, Hanfeng Li, and Hui~June Zhu.
\newblock Construction of some families of 2-dimensional crystalline
  representations.
\newblock {\em Math. Ann.}, 329(2):365--377, 2004.

\bibitem[Del71]{MR3077124}
Pierre Deligne.
\newblock Formes modulaires et repr\'{e}sentations {$l$}-adiques.
\newblock In {\em S\'{e}minaire {B}ourbaki. {V}ol. 1968/69: {E}xpos\'{e}s
  347--363}, volume 175 of {\em Lecture Notes in Math.}, pages Exp. No. 355,
  139--172. Springer, Berlin, 1971.

\bibitem[Die04]{LuisFamilies}
Luis~V. Dieulefait.
\newblock Existence of families of {G}alois representations and new cases of
  the {F}ontaine-{M}azur conjecture.
\newblock {\em J. Reine Angew. Math.}, 577:147--151, 2004.

\bibitem[Die07]{MR2414504}
Luis Dieulefait.
\newblock The level 1 weight 2 case of {S}erre's conjecture.
\newblock {\em Rev. Mat. Iberoam.}, 23(3):1115--1124, 2007.

\bibitem[Die12]{MR2959671}
Luis Dieulefait.
\newblock Remarks on {S}erre's modularity conjecture.
\newblock {\em Manuscripta Math.}, 139(1-2):71--89, 2012.

\bibitem[DS74]{MR379379}
Pierre Deligne and Jean-Pierre Serre.
\newblock Formes modulaires de poids {$1$}.
\newblock {\em Ann. Sci. \'{E}cole Norm. Sup. (4)}, 7:507--530 (1975), 1974.

\bibitem[Edi92]{MR1176206}
Bas Edixhoven.
\newblock The weight in {S}erre's conjectures on modular forms.
\newblock {\em Invent. Math.}, 109(3):563--594, 1992.

\bibitem[Eme11]{MR2251474}
Matthew Emerton.
\newblock Local-global compatibility in the {$p$}-adic {L}anglands programme
  for {${\rm GL}_{2/{\Bbb Q}}$}, preprint 2011.

\bibitem[Gee11]{MR2785764}
Toby Gee.
\newblock Automorphic lifts of prescribed types.
\newblock {\em Math. Ann.}, 350(1):107--144, 2011.

\bibitem[HT15]{Hu-Tan}
Yongquan Hu and Fucheng Tan.
\newblock The {B}reuil-{M}\'{e}zard conjecture for non-scalar split residual
  representations.
\newblock {\em Ann. Sci. \'{E}c. Norm. Sup\'{e}r. (4)}, 48(6):1383--1421, 2015.

\bibitem[Kha06]{MR2254626}
Chandrashekhar Khare.
\newblock Serre's modularity conjecture: the level one case.
\newblock {\em Duke Math. J.}, 134(3):557--589, 2006.

\bibitem[Kis09a]{MR2505297}
Mark Kisin.
\newblock The {F}ontaine-{M}azur conjecture for {${\rm GL}_2$}.
\newblock {\em J. Amer. Math. Soc.}, 22(3):641--690, 2009.

\bibitem[Kis09b]{MR2551765}
Mark Kisin.
\newblock Modularity of 2-adic {B}arsotti-{T}ate representations.
\newblock {\em Invent. Math.}, 178(3):587--634, 2009.

\bibitem[Kis09c]{Kisinann}
Mark Kisin.
\newblock Moduli of finite flat group schemes, and modularity.
\newblock {\em Ann. of Math. (2)}, 170(3):1085--1180, 2009.

\bibitem[KW09a]{MR2480604}
Chandrashekhar Khare and Jean-Pierre Wintenberger.
\newblock On {S}erre's conjecture for 2-dimensional mod {$p$} representations
  of {${\rm Gal}(\overline{\Bbb Q}/\Bbb Q)$}.
\newblock {\em Ann. of Math. (2)}, 169(1):229--253, 2009.

\bibitem[KW09b]{KW}
Chandrashekhar Khare and Jean-Pierre Wintenberger.
\newblock Serre's modularity conjecture. {I}.
\newblock {\em Invent. Math.}, 178(3):485--504, 2009.

\bibitem[KW09c]{MR2551764}
Chandrashekhar Khare and Jean-Pierre Wintenberger.
\newblock Serre's modularity conjecture. {II}.
\newblock {\em Invent. Math.}, 178(3):505--586, 2009.

\bibitem[Lan80]{MR574808}
Robert~P. Langlands.
\newblock {\em Base change for {${\rm GL}(2)$}}.
\newblock Annals of Mathematics Studies, No. 96. Princeton University Press,
  Princeton, N.J.; University of Tokyo Press, Tokyo, 1980.

\bibitem[Pan19]{1901.07166}
Lue Pan.
\newblock The fontaine-mazur conjecture in the residually reducible case. {T}o
  appear in {JAMS}, 2019.

\bibitem[Pa{\v{s}}15]{MR3306557}
Vytautas Pa{\v{s}}k{\={u}}nas.
\newblock On the {B}reuil-{M}\'{e}zard conjecture.
\newblock {\em Duke Math. J.}, 164(2):297--359, 2015.

\bibitem[Pa{\v{s}}16]{MR3544298}
Vytautas Pa{\v{s}}k{\={u}}nas.
\newblock On 2-dimensional 2-adic {G}alois representations of local and global
  fields.
\newblock {\em Algebra Number Theory}, 10(6):1301--1358, 2016.

\bibitem[Rib90]{MR1047143}
K.~A. Ribet.
\newblock On modular representations of {${\rm Gal}(\overline{\bf Q}/{\bf Q})$}
  arising from modular forms.
\newblock {\em Invent. Math.}, 100(2):431--476, 1990.

\bibitem[Rib94]{MR1265566}
Kenneth~A. Ribet.
\newblock Report on mod {$l$} representations of {${\rm Gal}(\overline{\bf
  Q}/{\bf Q})$}.
\newblock In {\em Motives ({S}eattle, {WA}, 1991)}, volume~55 of {\em Proc.
  Sympos. Pure Math.}, pages 639--676. Amer. Math. Soc., Providence, RI, 1994.

\bibitem[Rib97]{MR1610883}
Kenneth~A. Ribet.
\newblock Images of semistable {G}alois representations.
\newblock {\em Pacific J. Math.}, (Special Issue):277--297, 1997.
\newblock Olga Taussky-Todd: in memoriam.

\bibitem[RS01]{MR1860042}
Kenneth~A. Ribet and William~A. Stein.
\newblock Lectures on {S}erre's conjectures.
\newblock In {\em Arithmetic algebraic geometry ({P}ark {C}ity, {UT}, 1999)},
  volume~9 of {\em IAS/Park City Math. Ser.}, pages 143--232. Amer. Math. Soc.,
  Providence, RI, 2001.

\bibitem[RT97]{MR1610808}
David~E. Rohrlich and Jerrold~B. Tunnell.
\newblock An elementary case of {S}erre's conjecture.
\newblock Number Special Issue, pages 299--309. 1997.
\newblock Olga Taussky-Todd: in memoriam.

\bibitem[Sch05]{MR2148199}
Ren\'{e} Schoof.
\newblock Abelian varieties over {$\Bbb Q$} with bad reduction in one prime
  only.
\newblock {\em Compos. Math.}, 141(4):847--868, 2005.

\bibitem[Ser75]{MR0382173}
Jean-Pierre Serre.
\newblock Valeurs propres des op\'{e}rateurs de {H}ecke modulo {$l$}.
\newblock In {\em Journ\'{e}es {A}rithm\'{e}tiques de {B}ordeaux ({C}onf.,
  {U}niv. {B}ordeaux, 1974)}, pages 109--117. Ast\'{e}risque, Nos. 24--25.
  1975.

\bibitem[Ser87]{MR885783}
Jean-Pierre Serre.
\newblock Sur les repr\'{e}sentations modulaires de degr\'{e} {$2$} de {${\rm
  Gal}(\overline{\bf Q}/{\bf Q})$}.
\newblock {\em Duke Math. J.}, 54(1):179--230, 1987.

\bibitem[Ser98]{serre-book}
Jean-Pierre Serre.
\newblock {\em Abelian {$l$}-adic representations and elliptic curves},
  volume~7 of {\em Research Notes in Mathematics}.
\newblock A K Peters Ltd., Wellesley, MA, 1998.
\newblock With the collaboration of Willem Kuyk and John Labute, Revised
  reprint of the 1968 original.

\bibitem[Ser13]{MR3223094}
Jean-Pierre Serre.
\newblock {\em Oeuvres/{C}ollected papers. {III}. 1972--1984}.
\newblock Springer Collected Works in Mathematics. Springer, Heidelberg, 2013.
\newblock Reprint of the 2003 edition [of the 1986 original MR0926691].

\bibitem[Sno09]{Snowden}
Andrew Snowden.
\newblock On two dimensional weight two odd representations of totally real
  fields.
\newblock {\em arXiv:0905.4266v1 [math.NT]}, 2009.

\bibitem[SW99]{SW}
C.~M. Skinner and A.~J. Wiles.
\newblock Residually reducible representations and modular forms.
\newblock {\em Inst. Hautes \'Etudes Sci. Publ. Math.}, (89):5--126 (2000),
  1999.

\bibitem[Tat79]{MR546607}
J.~Tate.
\newblock Number theoretic background.
\newblock In {\em Automorphic forms, representations and {$L$}-functions
  ({P}roc. {S}ympos. {P}ure {M}ath., {O}regon {S}tate {U}niv., {C}orvallis,
  {O}re., 1977), {P}art 2}, Proc. Sympos. Pure Math., XXXIII, pages 3--26.
  Amer. Math. Soc., Providence, R.I., 1979.

\bibitem[Tat94]{MR1299740}
John Tate.
\newblock The non-existence of certain {G}alois extensions of {${\bf Q}$}
  unramified outside {$2$}.
\newblock In {\em Arithmetic geometry ({T}empe, {AZ}, 1993)}, volume 174 of
  {\em Contemp. Math.}, pages 153--156. Amer. Math. Soc., Providence, RI, 1994.

\bibitem[Tay02]{MR1954941}
Richard Taylor.
\newblock Remarks on a conjecture of {F}ontaine and {M}azur.
\newblock {\em J. Inst. Math. Jussieu}, 1(1):125--143, 2002.

\bibitem[Tun81]{MR621884}
Jerrold Tunnell.
\newblock Artin's conjecture for representations of octahedral type.
\newblock {\em Bull. Amer. Math. Soc. (N.S.)}, 5(2):173--175, 1981.

\bibitem[Tun21a]{1803.07451}
Shen-Ning Tung.
\newblock On the automorphy of 2-dimensional potentially semistable deformation
  rings of {$G_{\Bbb{Q}_p}$}.
\newblock {\em Algebra Number Theory}, 15(9):2173--2194, 2021.

\bibitem[Tun21b]{MR4257080}
Shen-Ning Tung.
\newblock On the modularity of 2-adic potentially semi-stable deformation
  rings.
\newblock {\em Math. Z.}, 298(1-2):107--159, 2021.

\bibitem[TW95]{MR1333036}
Richard Taylor and Andrew Wiles.
\newblock Ring-theoretic properties of certain {H}ecke algebras.
\newblock {\em Ann. of Math. (2)}, 141(3):553--572, 1995.

\bibitem[Wil95]{MR1333035}
Andrew Wiles.
\newblock Modular elliptic curves and {F}ermat's last theorem.
\newblock {\em Ann. of Math. (2)}, 141(3):443--551, 1995.

\end{thebibliography}
\end{document}